\tikzset{>= angle 60}
\newtheorem{theorem}{Theorem}[section]
\newtheorem{proposition}[theorem] {Proposition}
\newtheorem{definition}[theorem]{Definition}
\newtheorem{corollary}[theorem]{Corollary}
\newtheorem{lemma}[theorem]{Lemma}
\newtheorem{remark}[theorem]{Remark}
\newcommand{\id}{\mathrm{id}}
\def\M{\mathcal{M}}
\def\P{\mathcal{P}}
\def\QQ{\mathcal{Q}}
\def\p{\boldsymbol{p}}
\def\bdelta{\boldsymbol{a}}
\def\R{{\mathbb R}}
\def\Z{{\mathbb Z}}
\def\lf{\lfloor}
\def\rf{\rfloor}
\begin{document}

\title{Rotation number of 2-interval piecewise affine maps}

\maketitle

\centerline{ Jos\'e Pedro Gaiv\~ao, Michel Laurent and  Arnaldo Nogueira}

\begin{abstract}
We study  maps of the unit interval whose graph is made up of two increasing segments and which are injective in an extended sense. Such maps $f_{\p}$ are parametrized by a quintuple $\p$ of real numbers satisfying  inequations. Viewing $f_{\p}$ as a circle map, we show that it has a rotation number $\rho(f_{\p})$ and we compute $\rho(f_{\p})$ as a function of $\p$ in terms of Hecke-Mahler series.
As a corollary, we prove that $\rho(f_{\p})$ is a rational number when the components of $\p$ are algebraic numbers. 
\end{abstract}

\tableofcontents

\footnote{\footnote \rm 2010 {\it Mathematics Subject Classification:}   
11J91,   37E05. }
 
 \section{Introduction}\label{sec:intro}
Let $\R/\Z$ denote the unit circle and $f\colon \R/\Z\to\R/\Z$ be an  orientation-preserving circle homeomorphism. Any continuous lift $F\colon \R\to\R$ of $f$ is strictly increasing and $F-\id$ is $one$-periodic. In order to study the dynamics of orientation preserving circle homeomorphisms, Poincar\'e introduced an invariant quantity $\rho(f)\in [0,1)$, known as the {\it rotation number} of $f$ that measures the average rotation along any orbit of $f$. Given any continuous lift $F\colon \R\to\R$ of $f$ and $x\in\R$, the rotation number of $f$ is defined as 
$$ 
\rho(f):=\lim_{n \to \infty}\frac{F^n(x)}{n}\pmod 1.
$$ 
This limit exists and is independent of $x$ and the lift $F$. Moreover, $\rho(f)$ is rational if and only if $f$ 
%or $f^-$ 
has a periodic point.
% where the map  $f^-$ is a version of $f$ defined on the interval $(0,1]$ (see  \cite[Definition 17(i)]{LN21}).

%\cite[Corollary 1]{RT86}

The theory of  rotation number was extended to orientation-preserving circle maps which are not continuous, neither surjective, in particular by Rhodes and Thompson  \cite{RT86,RT91}. Let $\M$ denote the set of circle maps $f$ whose lifts $F$ are strictly increasing and $F-\id$ is $one$-periodic. 
Rhodes and Thompson  have shown that the rotation number is well defined for all maps $f\in \M$ and it varies continuously as a function of $f$ on any continuous one parameter family contained in $\M$.  

By identifying the unit interval $I:=[0,1)$ with $\R/\Z$ through the canonical bijection $I\hookrightarrow \R \to \R/\Z$, we may view any circle map in $\M$ as an orientation preserving injective map of $I$. Through this identification, we associate a rotation number to any orientation preserving injective  map. 

%Our primary motivation was the study of the dynamics of contractive interval maps  \cite{CGMU16,G20,GCM20

The study of the rotation number of injective piecewise affine increasing contractions with only one discontinuity point on $\R/\Z$ and a unique slope was introduced in Canaiello's neuronic equations which are $2$-interval piecewise contractions, see \cite{H15} for a discussion of the topic. 
The dynamics of piecewise contractions has been studied by many authors, in particular \cite{CGMU16,G20,GCM20,NPR14,NP15,NPR18}.
  A detailed study of contracted rotations (meaning that the two branches have equal slopes) can be found in  \cite{NS72,DH87,FC91,Bu93,BuC99,Bu04,H15,LN18,N18,JO19,BS20,CCG21,BuKLN21,GN22, ABP22}. In \cite{LN18},  Laurent and Nogueira were the first to relate transcendental properties of  the parameters of these maps  to the irrationality of their rotation number.  Later in \cite{LN21},  Laurent and Nogueira  extended their work to  allow  injective maps with two different slopes and a single discontinuity point. However, they also studied the dynamics of maps  which are not piecewise contractions, in particular see  \cite[Corollary, page 36]{LN21}, where the rotation number of a $2$-interval piecewise affine circle homeomorphism is obtained. 

This family of maps is also known in the literature as contractive piecewise linear Lorenz maps.   It is sometimes claimed, see for instance \cite[page 237]{CD15},  that the dynamics of such a  contractive map is trivial, meaning that all its orbits converge to a periodic cycle. This is almost true, but not always. In fact, for an uncountable set of parameters, the corresponding maps have singularly continuous invariant probability measures and all the orbits converge to a Cantor set sharing fine arithmetical properties  which are investigated in \cite{Bu04} and \cite{BuKLN21}.

In the present work, we extend the framework of \cite{LN21} by allowing an additional discontinuity to the circle map $f$ at the break point (denoted $\eta$ below) and even  allowing $f$ to be non-injective in some cases.  Moreover, the family of maps we consider are not necessarily piecewise contractions, i.e., one of its branches may expand.

The article is organized as follows.  In Section~\ref{sec:setting} we introduce our family of maps whose set of parameters  is  described in Section~\ref{sec:desc}.  In Section~\ref{sec:rotation} we prove that the maps we consider have a well-defined rotation number. Our main results will be stated in Section~\ref{sec:results}.  Theorem~\ref{maintheorem} describes the rotation number of  $f=f_{\p}$ as a function of the parameter $\p$  and Theorem~\ref{conjugacy} makes explicit the semi-conjugacy of  $f$ to a rotation following the approach given in  \cite{Co99}.  In order to prove our main results  the dynamics of our map is reduced  to that of a map on an invariant interval. Up to an isomorphism, this map belongs to the family of  maps that  have been studied  in \cite{LN21}. 
In Corollary~\ref{maincorollary}, using transcendence results on Hecke-Mahler series, it is proved that the rotation number takes a rational value when all  the components of $\p$ are algebraic numbers.
 In Section~\ref{sec:reduction}, we explain our strategy for the proof of the results in Section~\ref{sec:results}. The properties  of two functions $\bdelta$ and  $\phi$ describing the dynamics of $f$ are displayed in  Section~\ref{sec:properties}. The proofs of the main results from Section \ref{sec:results} are given in Section~\ref{sec:proofs}.

\section{The setting}\label{sec:setting}
Let $I = [0,1)$ be the unit interval identified with the circle $\R/ \Z$ as in Section~\ref{sec:intro}. We are concerned with the dynamics of maps $f : I \to I$ as pictured in Figure \ref{fig:f} below. Namely, the graph of $f$ is made up with two increasing segments such that $f$ has no fixed point  and is injective when restricted to a certain invariant subinterval of $I$. Such a map $f$ has a {\it rotation number} $\rho(f)\in I$ and is semi-conjugated to the rotation $R_\rho : x\to x+ \rho \pmod{1}$. 
Our goal is to make explicit these two assertions in terms of the parameters defining $f$. The precise statements are given in Theorem  \ref{maintheorem} and \ref{conjugacy} below.

\begin{figure}[ht] 
\begin{tikzpicture}
\draw (0,0) -- (5,0);
\draw (0,0) -- (0,5);
\draw[thick]   (0,3.5) -- (2.5,4.5);
\draw[thick]  (2.5,1) -- (5,4);
\node   at (-0.3,-0.3) {$0$};
\draw[dashed] (0,1) -- (2.5,1);
\draw[dashed] (0,5) -- (5,5);
\draw[dashed] (5,0) -- (5,5);
\draw[dashed] (0,4.5) -- (2.5,4.5);
%\draw[dashed] (0,3.5) -- (2.5,3.5);
\node at (1,4.2) {$\lambda$};
\node at (3.6,2.8) {$\lambda\mu$};
\node at (-0.3,4.5) {$b$};
\node   at (-0.3,1) {$c$};
\node at (2.5,-0.6) {$\eta = \frac{b-a}{\lambda}$};
\node   at (-0.3,5) {$1$};
\node   at (-0.3,3.5) {$a$};
%\draw[dashed]   (0,3) -- (5,3);
%\node   at (-1.7,3) {$\mu(\lambda + a - b)+c$};
\node   at (5,-0.4) {$1$};
\draw[dashed]  (2.5,0) -- (2.5,4.5);
\end{tikzpicture}
\caption{ A plot of $f_{p}$}
\label{fig:f}
\end{figure}
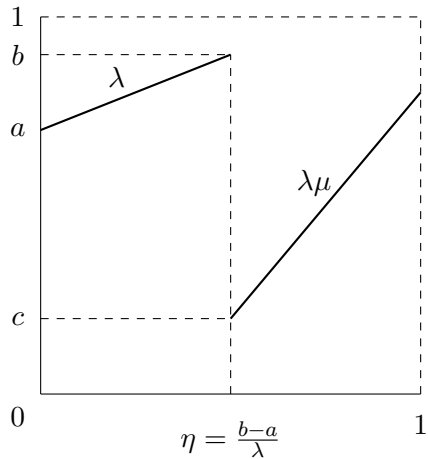

We parametrize those maps $f$ by a quintuple $\p=(\lambda, \mu ,a, b, c)$ as follows. 
\begin{definition} 
 Let us denote by $\P$ the set of quintuple parameters  $\p=(\lambda,\mu,a, b,c)$ satisfying the inequalities
\begin{eqnarray}
\label{ineq1}&0< \lambda  <1, \, \mu >0,  \,  0 \le c  < b \le 1, \, \lambda\mu \le 1 \, \text{or} \,\,  (1-b)\mu \le 1-c,\\ %ml
\label{ineq2}&b-b\lambda <a <b-c\lambda ,\\
\label{ineq3}&(1-\lambda)(c- \mu b)  < (1-\mu)a. 
\end{eqnarray}
Set $\displaystyle  \eta = {b-a\over \lambda}$ and  define   $f_{\p}: I \to I$  by the splitted formula
$$
f_{\p}(x) =  \begin{cases} \lambda x + a,   &\text{if} \quad  0\le x < \eta , 
\\
\lambda\mu(x-\eta)+ c,    & \text{if}\quad \eta \le x <1.
\end{cases}
$$
\end{definition}

The left branch of the graph  has slope $\lambda$ and endpoints $(0,a)$ and $(\eta,b)$, while the right branch has slope $\lambda\mu$ and has for origin the point $(\eta,c)$. 
Assumption \eqref{ineq2} ensures that the break point $\eta$ belongs to the open interval $(c,b)$. It implies that $f_{\p}$ has no fixed point. 
Assumption \eqref{ineq3} ensures that $f_{\p}$ is injective when restricted to the interval $J:=[c,b)$. 
Indeed, we have
$$
f_{\p}(c)= \lambda c + a \quad  \text{and}\quad  f_{\p}(b^-)= \lambda\mu(b-\eta)+ c = \lambda\mu b +\mu(a-b) +c,
$$
so that the inequality 
\begin{equation}\label{injectivity}
f_{\p}(c)> f_{\p}(b^-)
\end{equation}
 is equivalent to \eqref{ineq3}.
Equivalently, \eqref{ineq3} ensures that $f_{\p}$ contracts on average in $[c,b)$, i.e.,   %ml
\begin{equation}\label{ineq3'}
\lambda \frac{\eta-c}{b-c}+\lambda\mu \frac{b-\eta}{b-c}<1.
\end{equation}
It remains to show that the image $f_{\p}(I) $ is contained in $I$. Looking at Figure 1, it is sufficient to prove that $f_{\p}(1^-) < 1$. 
We postpone the proof to Section \ref{sec:desc} which makes use of the assumptions,
 $ \lambda \mu \le 1$ or $\displaystyle \mu \le {1-c\over 1-b}$, occurring in \eqref{ineq1}.  %ml
%Since $c < \eta < b$, we have
%$$
%f_{\p}(1^-) = f_{\p}(b^-)+ \lambda\mu(1-b) < f_{\p}(c) + \lambda\mu (1-b) < b +\lambda\mu(1-b) \le 1,
%$$
%using inequalities \eqref{injectivity} and \eqref{ineq1}.

There is no loss of generality assuming $0<\lambda <1$ in Assumption \eqref{ineq1}, since the homeomorphism 
$\displaystyle (x,y)\in (0,1)^2 \mapsto (1-x,1-y) \in (0,1)^2$
exchanges the two branches and at least one slope must be less than $1$ by \eqref{ineq3'}.

\section{Description of the parameter set $\P$}\label{sec:desc}

In order to study the dynamics of $f_{\p}$, we shall basically view the four parameters $\lambda,\mu,b,c$ as fixed and regard the fifth parameter $a $ as a variable. It turns out that $a$ ranges over an interval once $(\lambda,\mu,b,c)$ has been fixed. The description of this interval is the following.

Set 
\begin{equation}\label{definition of Q}
\QQ:=\left\{(\lambda,\mu,b,c)\colon 0<\lambda<1,\,\mu>0,\, 0\leq c<b\leq 1, \,  \lambda\mu  \le 1\,  %ml
\text{or} \, \,\mu(1-b)\le 1-c  \right\}
\end{equation}
and define 
\begin{equation}\label{definition of d}
d_{\lambda,\mu,b,c}:=\begin{cases} b-c\lambda,& \text{if} \quad \lambda\mu<1,\\
\displaystyle \frac{(1-\lambda)(\mu b-c)}{\mu-1},&\text{if} \quad\lambda\mu\geq1.
\end{cases}
\end{equation}
Notice that $\displaystyle b-c\lambda=\frac{(1-\lambda)(\mu b-c)}{\mu-1}$ when $\lambda\mu=1$. %ml

\begin{lemma}
For every $(\lambda,\mu,b,c)\in\mathcal{Q}$, the inequality $b-b\lambda<d_{\lambda,\mu,b,c}$ holds and  
$$
\mathcal{P} = \left\{(\lambda,\mu,a,b,c)\colon (\lambda,\mu,b,c)\in\mathcal{Q},\, a\in (b-b\lambda,d_{\lambda,\mu,b,c})\right\}.$$
 \end{lemma}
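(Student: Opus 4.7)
The plan is to first establish $b-b\lambda<d_{\lambda,\mu,b,c}$ in each branch of its piecewise definition, then read \eqref{ineq2} and \eqref{ineq3} as conditions on $a$ and identify the feasible interval. For the preliminary inequality, the case $\lambda\mu<1$ gives difference $\lambda(b-c)>0$, while for $\lambda\mu\ge 1$ (which forces $\mu>1$ since $\lambda<1$) a short calculation yields
$$
d_{\lambda,\mu,b,c}-(b-b\lambda)=\frac{(1-\lambda)(\mu b-c)-b(1-\lambda)(\mu-1)}{\mu-1}=\frac{(1-\lambda)(b-c)}{\mu-1},
$$
again positive because $c<b$.

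Next I would fix $(\lambda,\mu,b,c)\in\mathcal{Q}$ and split the analysis of \eqref{ineq3} by the sign of $1-\mu$. If $\mu=1$, then \eqref{ineq3} reduces to $(1-\lambda)(c-b)<0$, which is automatic, and $\lambda\mu=\lambda<1$ so $d_{\lambda,\mu,b,c}=b-c\lambda$; thus \eqref{ineq2} alone gives $a\in(b-b\lambda,d_{\lambda,\mu,b,c})$. If $\mu<1$, then $1-\mu>0$ and \eqref{ineq3} becomes the lower bound $a>(1-\lambda)(c-\mu b)/(1-\mu)$; multiplying by $1-\mu$, this bound is at most $b(1-\lambda)=b-b\lambda$ iff $c\le b$, which holds. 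Hence \eqref{ineq2} dominates, and since $\lambda\mu<1$ still $d_{\lambda,\mu,b,c}=b-c\lambda$.

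The only substantive case is $\mu>1$. Here \eqref{ineq3} becomes the upper bound $a<D:=(1-\lambda)(\mu b-c)/(\mu-1)$, which must be compared to the upper bound $b-c\lambda$ of \eqref{ineq2}. A direct expansion produces the key identity
$$
(b-c\lambda)-D=\frac{(b-c)(\lambda\mu-1)}{\mu-1},
$$
so the sign of $(b-c\lambda)-D$ matches that of $\lambda\mu-1$. When $\lambda\mu<1$ the bound from \eqref{ineq2} is tighter and $d_{\lambda,\mu,b,c}=b-c\lambda$; when $\lambda\mu\ge 1$ the bound $D$ is tighter and $d_{\lambda,\mu,b,c}=D$. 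Either way the combined feasible interval for $a$ is exactly $(b-b\lambda,d_{\lambda,\mu,b,c})$, proving the set equality.

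The main obstacle is tracking which of the two upper bounds is binding in the case $\mu>1$. The factorisation $(b-c)(\lambda\mu-1)$ in the identity above is precisely what makes the piecewise cutoff at $\lambda\mu=1$ in the definition of $d_{\lambda,\mu,b,c}$ natural; once this short identity has been pinned down, the remaining case analysis is routine bookkeeping.
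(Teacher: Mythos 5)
Your proof is correct and follows essentially the same route as the paper: treat $a$ as the free variable, split on the sign of $\mu-1$, and compare the bound from \eqref{ineq3} with the bounds $b-b\lambda<a<b-c\lambda$ from \eqref{ineq2}, the identity $(b-c\lambda)-D=\frac{(b-c)(\lambda\mu-1)}{\mu-1}$ being exactly what justifies the cutoff at $\lambda\mu=1$ in the definition of $d_{\lambda,\mu,b,c}$. You also write out the preliminary inequality $b-b\lambda<d_{\lambda,\mu,b,c}$, which the paper dismisses as a simple exercise.
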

 
\begin{proof}
Given $(\lambda,\mu,b,c)\in\QQ$ it is a simple exercise to show that $b-b\lambda<d_{\lambda,\mu,b,c}$. Let $(\lambda,\mu,a,b,c)\in\P$. By \eqref{ineq2}, we have the inequalities $b-b\lambda <a<b-c\lambda$. Now, we solve inequality \eqref{ineq3} with respect to $a$. We have three cases:
\begin{itemize}
\item When $\mu<1$,  then \eqref{ineq3} reads
$$
a>\frac{(1-\lambda)(c-\mu b)}{1-\mu}=(1-\lambda) b -\frac{(1-\lambda)(b-c)}{1-\mu} , 
$$
which is greater than $b-b\lambda$, since $\displaystyle \frac{(1-\lambda)(b-c)}{1-\mu}>0$.  We conclude in this case that \eqref{ineq2} and \eqref{ineq3} together give
$$
 b-b\lambda < a < b-c\lambda = d_{\lambda ,\mu,b,c}
$$
as required, since $\lambda\mu<1$.
\item When $\mu>1$,  then \eqref{ineq3} reads
$$
a<\frac{(1-\lambda)(\mu b-c)}{\mu-1} = (1-\lambda)b+\frac{(1-\lambda)(b-c)}{\mu-1}.
$$
Thus, \eqref{ineq2} and \eqref{ineq3} together give the interval 
$$
b-b\lambda < a < \min\left\{b-c\lambda,(1-\lambda)b+\frac{(1-\lambda)(b-c)}{\mu-1}\right\}.
$$
In this case, a simple computation shows that
$$
\min\left\{b-c\lambda,(1-\lambda)b+\frac{(1-\lambda)(b-c)}{\mu-1}\right\} = \begin{cases} b-c\lambda,& \text{if}\quad\lambda\mu<1\\
\displaystyle \frac{(1-\lambda)(\mu b-c)}{\mu-1},&\text{if}\quad \lambda\mu\geq1
\end{cases}
.
$$
\item When $\mu=1$, the inequality \eqref{ineq3} reads $(1-\lambda)(c-b) <0$, which is obviously satisfied. We thus get the interval
$$
b-b\lambda <a < b-c\lambda = d_{\lambda,1,b,c}.
$$
\end{itemize}  
Therefore,  we find the interval $b-b\lambda<a<d_{\lambda,\mu,b,c}$ in the three cases.
\end{proof}

The assumptions, $\lambda \mu \le 1 $ or $(1-b)\mu \le 1-c$,  
occurring in the above definition of $\QQ$ are needed to obtain the
\begin{lemma}
For every $\p \in \P$, the inequality  $f_{\p}(1^-) < 1$ holds true.
\end{lemma}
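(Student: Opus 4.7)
The plan is to compute $f_{\p}(1^-)$ explicitly from the right-branch formula and then bound the variable $a$ using the strict constraints \eqref{ineq2} and \eqref{ineq3}, splitting according to the disjunction in \eqref{ineq1}. Since $\eta = (b-a)/\lambda$, one rewrites
$$
f_{\p}(1^-) = \lambda\mu(1-\eta) + c = \mu(a+\lambda-b) + c,
$$
so the target $f_{\p}(1^-) < 1$ is equivalent to the strict bound $\mu(a+\lambda-b) < 1-c$. Note that \eqref{ineq2} gives $a>b(1-\lambda)\ge b-\lambda$, so $a+\lambda-b>0$ and the left hand side is positive.

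First I would handle the case $\lambda\mu \le 1$. Here the strict upper bound $a < b - c\lambda$ from \eqref{ineq2} yields
$$
\mu(a+\lambda-b) < \mu\bigl((b-c\lambda)+\lambda-b\bigr) = \mu\lambda(1-c) \le 1-c,
$$
which settles this case.

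The remaining case is $(1-b)\mu \le 1-c$ together with $\lambda\mu > 1$, which forces $\mu > 1$. Then \eqref{ineq3} rearranges to the strict bound $a < (1-\lambda)(\mu b-c)/(\mu-1)$. On the other hand, the target $\mu(a+\lambda-b)+c<1$ is equivalent to $a<\bigl(1-c+\mu(b-\lambda)\bigr)/\mu$, so it suffices to establish the non-strict comparison of thresholds
$$
\frac{(1-\lambda)(\mu b-c)}{\mu-1} \le \frac{1-c+\mu(b-\lambda)}{\mu}.
$$
Clearing the positive denominator $\mu(\mu-1)$ and expanding, I expect the difference of the two sides to factor as
$$
(1-\lambda\mu)\bigl(\mu(1-b)-(1-c)\bigr),
$$
which is the product of two non-positive factors under the hypotheses of this case, and therefore non-negative.

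The main obstacle is purely algebraic: producing the clean factorization in the second case. The raw expansion of the cross-multiplied inequality involves several canceling monomials, and isolating the correct grouping is the only non-trivial step. Once the factorization is established, combining the non-strict comparison of thresholds with the strict bound on $a$ furnished by \eqref{ineq3} yields $f_{\p}(1^-) < 1$ at once, thereby ensuring $f_{\p}(I)\subset I$.
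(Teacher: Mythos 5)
Your proof is correct, and the computations check out (in particular the cross-multiplied difference in the second case does factor as $(1-\lambda\mu)\bigl(\mu(1-b)-(1-c)\bigr)$). The argument is essentially the same as the paper's: both compute $f_{\p}(1^-)=\mu(a+\lambda-b)+c$, bound $a$ strictly from above, and split on $\lambda\mu\le 1$ versus $\lambda\mu>1$ (where the paper's factor $(\lambda\mu-1)\bigl(1-\frac{\mu b-c}{\mu-1}\bigr)$ is the same quantity as your factorization up to the positive factor $\mu(\mu-1)$).
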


\begin{proof}
We write 
$$
f_{\p}(1^-)= \lambda\mu(1-\eta) + c = \lambda \mu  - \mu b + \mu a +c,
$$
and use the upper bound $a < d_{\lambda,\mu,b,c}$. When $\lambda\mu \le 1$, %ml
by \eqref{definition of d} we have $d_{\lambda,\mu,b,c}= b-c\lambda$, so that
$$
f_{\p}(1^-) < \lambda \mu  - \mu b + \mu (b-c\lambda) +c = \lambda\mu+(1-\lambda\mu)c = 1-(1-\lambda\mu)(1-c) \le 1, %ml
$$
as required. When $\lambda\mu > 1$, %ml
by \eqref{definition of d} we have $d_{\lambda,\mu,b,c}= { (1-\lambda)(\mu b-c)\over \mu-1}$, so that
$$
f_{\p}(1^-) < \lambda \mu  - \mu b + {\mu (1-\lambda)(\mu b-c)\over \mu-1} +c = \lambda\mu+\left(-1 + {\mu(1-\lambda)\over \mu-1}\right)(\mu b -c) =1 + (\lambda\mu-1) \left( 1 -{\mu b -c\over \mu-1}\right).
$$
Notice finally that the factor $\displaystyle 1 -{\mu b -c\over \mu-1}$ is $\le 0$, since $\displaystyle 1 < \mu \le {1-c\over 1-b}$ by assumption, %ml
 while $\lambda\mu -1 >0$. %ml

\end{proof}

%\begin{remark}\label{rem:positive rho}
%Let $\p=(\lambda,\mu,a,b,c)\in \P$.  Lemma~\ref{lem:D} shows that $\rho(f_{\p})>0$ if and only if $(\lambda,\mu,a,b,c)\in\P'$.
%\end{remark}

\begin{remark}\label{rem:P0}
In \cite{LN21},  a 2-interval piecewise affine map depending on a triple of parameters $(\lambda,\mu,\delta)$ was studied.  The parameters $(\lambda,\mu,\delta)$ were assumed to satisfy the inequalities: 
\begin{equation}\label{parametes of LN}
0<\lambda<1,\quad \mu>0\quad\text{and}\quad 1-\lambda < \delta <d_{\lambda,\mu}:=\begin{cases} 1,&\text{if} \quad \lambda\mu<1\\ 
\displaystyle{  \frac{(1-\lambda)\mu}{\mu-1}} ,&\text{if}\quad \lambda\mu\geq 1\end{cases}.
\end{equation}
In  case  $b=1$ and $c=0$, we recover  the description of the set of parameters in \eqref{parametes of LN}.  To see it,  let
\begin{equation}\label{def:P'0}
\P_0:=\{(\lambda,\mu,a,b,c)\in\P\colon b=1,\, c=0\}.
\end{equation}
Taking into account the definition of $\QQ$ in \eqref{definition of Q}, we have $(\lambda,\mu,1,0)\in \QQ$ if and only if $0<\lambda<1$ and $\mu>0$.  Moreover,  given $(\lambda,\mu,1,0)\in \QQ$ we have
$
d_{\lambda,\mu,1,0}=d_{\lambda,\mu}.
$
Indeed, this follows from the identity
\begin{equation}\label{d identity}
d_{\lambda,\mu,b,c}=d_{\lambda,\mu}(b-c)+c(1-\lambda).
\end{equation}
Thus,
$$
\P_0=\{(\lambda,\mu,a,1,0)\colon 0<\lambda<1,\,\mu>0,\,  1-\lambda<a<d_{\lambda,\mu}\}. 
$$
\end{remark}

\section{Rotation number of $f_{\p}$}\label{sec:rotation}

%We show in this Section that a rotation number of the map $f_{\p}$ does exist for any $\p \in  \P$. 
Let $\p \in \P$ be given. Define a real function $F=F_{\p}\colon \R \to \R$ by
$$
F(x)=\begin{cases}
\lambda x+ a+ (1-\lambda)\lfloor x \rfloor,&\text{if} \quad 0\leq \{x\} <\eta\\
\lambda\mu(x-\eta)+c+1+(1-\lambda\mu)\lfloor x \rfloor,&\text{if}\quad \eta\leq \{x\} <1\\
\end{cases}.
$$

Let $f=f_{\p}$.  Clearly,  $F$ is a \textit{lift} of $f$, i.e., for every $x\in\R$,
\begin{enumerate}
\item $F(x+1)=F(x)+1$,
\item $\{F(x)\}=f(\{x\})$.
\end{enumerate}

Notice that $F$ is not always strictly increasing, so we cannot apply immediately the theory of Rhodes and Thompson developed in \cite{RT86} that generalizes the classical theory of the rotation number of Poincar\'e to circle maps having some strictly increasing lift. Nevertheless, we show  in Lemma~\ref{lem:rotation} below,  that $f$ has a well-defined rotation number.

Let
$$
X = \{x\in \R\colon c\leq \{x\} <b\}.
$$
%The following lemma is simple to derive.
\begin{lemma}\label{lem:prop of F} $F$ satisfies the following properties:
\begin{enumerate}
\item $F(X)\subseteq X$,
\item $F|_X$ is strictly increasing,
\item for every $x\in \R$ there exists $n\geq0$ such that $F^n(x)\in X$.
\end{enumerate}
\end{lemma}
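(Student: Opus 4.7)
The plan is to verify the three properties in turn.

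For \textbf{(1)}, I will use the identity $\{F(x)\}=f_{\p}(\{x\})$, so that forward invariance of $X$ under $F$ reduces to forward invariance of $[c,b)$ under $f_{\p}$. I split $[c,b)$ at the break point $\eta$ into $[c,\eta)$ and $[\eta,b)$. On the first piece $f_{\p}$ has slope $\lambda$ and image $[\lambda c+a,b)$; its left endpoint exceeds $c$ because \eqref{ineq2} gives $a>b(1-\lambda)\geq c(1-\lambda)$. On the second piece $f_{\p}$ has slope $\lambda\mu$ and image $[c,f_{\p}(b^-))$, and $f_{\p}(b^-)<b$ follows from \eqref{injectivity}, the reformulation of \eqref{ineq3} noted in Section~\ref{sec:setting}.

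For \textbf{(2)}, I write $X=\bigsqcup_{n\in\Z}X_n$ with $X_n=[n+c,n+b)$. On $X_n$ the lift $F$ is piecewise linear with positive slopes $\lambda$ and $\lambda\mu$ and a single potential discontinuity at $n+\eta$ of size $(n+1+c)-(n+b)=1+c-b\geq 0$, so $F|_{X_n}$ is strictly increasing. For consecutive components, a direct calculation gives $\sup_{X_n}F=n+1+f_{\p}(b^-)$ (the first-branch limit $n+b$ being dominated since $b\leq 1\leq 1+f_{\p}(b^-)$) and $\inf_{X_{n+1}}F=F(n+1+c)=n+1+f_{\p}(c)$, so the required strict comparison $\sup_{X_n}F<\inf_{X_{n+1}}F$ is precisely \eqref{injectivity}.

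For \textbf{(3)}, since $F$ commutes with integer translations I may restrict to $x\in[0,1)\setminus[c,b)$ and follow $t_n=\{F^n(x)\}$. If $t_0\in[0,c)$, then while $t_n<\eta$ the recursion $t_{n+1}=\lambda t_n+a$ is an affine contraction with fixed point $a/(1-\lambda)>b$ by \eqref{ineq2}; thus $t_n$ is strictly increasing, and at the first step $n_0$ with $t_{n_0}\geq c$ one has $t_{n_0}<\lambda c+a\leq\lambda\eta+a=b$, placing $t_{n_0}\in[c,b)$. If $t_0\in[b,1)$, the applicable recursion is $t_{n+1}=\lambda\mu(t_n-\eta)+c$ while $t_n\geq b$, and I check in the three regimes that $t_n$ must drop below $b$ in finite time. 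When $\lambda\mu<1$ the map contracts to the fixed point $(c-\lambda\mu\eta)/(1-\lambda\mu)$, which lies below $b$ precisely because $f_{\p}(b^-)<b$. When $\lambda\mu=1$ the iterates decrease by the constant $\eta-c>0$. When $\lambda\mu>1$ the fixed point $(\lambda\mu\eta-c)/(\lambda\mu-1)$ exceeds $1$, an inequality equivalent to the bound $f_{\p}(1^-)<1$ established as the second lemma of Section~\ref{sec:desc}, so the iterates are repelled downward. At the first exit one has $t_{n_0}\geq\lambda\mu(b-\eta)+c=f_{\p}(b^-)\geq c$, placing $t_{n_0}\in[c,b)$.

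The principal obstacle lies in part (3) in the expansive regime $\lambda\mu>1$: there the constraint $(1-b)\mu\leq 1-c$ built into \eqref{ineq1}, which secures the previous lemma $f_{\p}(1^-)<1$, is essential to keep the unstable fixed point of the second branch above $1$ and thus prevent orbits from being trapped in $[b,1)$.
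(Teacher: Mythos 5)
Your proof is correct and follows essentially the same route as the paper: (1) reduces to the invariance of $[c,b)$ under $f_{\p}$, (2) comes down to the inequality \eqref{injectivity} comparing $F$ near the right end of one component of $X$ with its value at the left end of the next, and (3) iterates the appropriate affine branch monotonically until the orbit lands in $[c,b)$. The only difference is that in (3) you justify the finiteness of the escape time explicitly via the fixed points of the two branches (which is where $f_{\p}(1^-)<1$ enters in the expansive regime), whereas the paper simply asserts that the monotone sequence of iterates must eventually fall into $[c,b)$.
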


\begin{figure}[ht] 
\begin{tikzpicture}
\draw (0,0) -- (6,0);
\draw (0,0) -- (0,6);
\draw (0,0) -- (6,6);
\draw (3,3) -- (4,3);
\draw (4,3) -- (4,4);
\draw (4,4) -- (3,4);
\draw (3,4) -- (3,3);
\draw[thick]   (3.5,3) -- (6,5);
\draw[thick]  (0,2) -- (3.5,4);
\node   at (-0.3,-0.3) {$0$};
\draw[dashed] (0,4) -- (3,4);
\draw[dashed] (0,3) -- (3,3);
\draw[dashed] (4,4) -- (4,0);
\draw[dashed] (3,3) -- (3,0);
\draw (6,0) -- (6,6);
%\draw[dashed] (0,4.5) -- (2.5,4.5);
%\draw[dashed] (0,3.5) -- (2.5,3.5);
\node at (3,-0.4) {$c$};
\node at (4,-0.4) {$b$};
\node at (-0.3,4) {$b$};
\node   at (-0.3,2) {$a$};
%\node at (2.5,-0.6) {$\eta = \frac{b-a}{\lambda}$};
\node   at (-0.3,6) {$1$};
\node   at (-0.3,3) {$c$};
%\draw[dashed]   (0,3) -- (5,3);
%\node   at (-1.7,3) {$\mu(\lambda + a - b)+c$};
\node   at (6,-0.4) {$1$};
\draw  (0,6) -- (6,6);
\draw[red] (1, 2.55) -- (1,0);
\draw[red] (1, 2.55) -- (2.55,2.55);
\draw[red] (2.55, 2.5) -- (2.55,3.45);
\draw[red] (2.55, 3.45) -- (3.45,3.45);
\draw[red] (3.45, 3.45) -- (3.45,0);
\draw[blue] (5.5, 0) -- (5.5,4.6);
\draw[blue] (5.5, 4.6) -- (4.6,4.6);
\draw[blue] (4.6, 4.6) -- (4.6,3.9);
\draw[blue] (4.6, 3.9) -- (3.9,3.9);
\draw[blue] (3.9, 3.9) -- (3.9,0);

\end{tikzpicture}
\caption{ Iterated values of $f$ in blue and red}
\label{fig:orbits}
\end{figure}
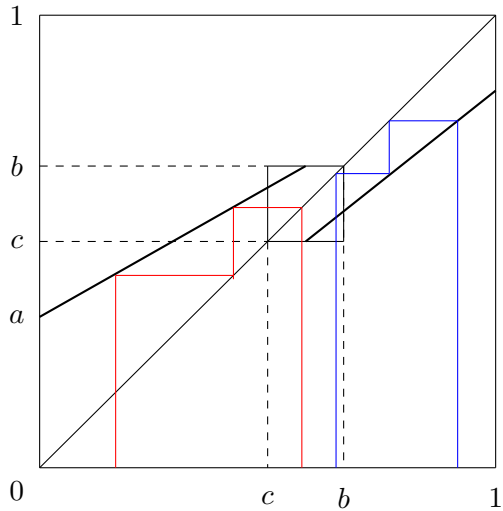

\begin{proof}
Assertion (1) follows from the obvious inclusion $f([c,b))\subseteq [c,b)$, while Assertion (2) follows from the  inequality \eqref{injectivity} by reduction modulo one. Indeed, for any integer $p$, we have
$$
F(b^-+p) =f(b^-)+p+1 < f(c)+p+1 = F(c+p+1).
$$
For Assertion (3),  the smallest value of $n$ depends on the location of $\{ x\}$ on the unit interval. If $\{ x\}\in [c,b)$, we can obviously
choose $n=0$. If $b \le \{ x\} <1$, we have  $c \le f(\{ x\}) < \{ x\}$. Then, if $ f(\{ x\}) < b$, we choose $n=1$. Otherwise, we have 
$b \le f(\{ x\}) <1$ and we iterate, so that $ c \le f^2(\{ x\} ) <f(\{ x\})$. If $ f^2(\{ x \} <b$, we choose $n=2$. Otherwise, we iterate once again and so on. We thus obtain a finite sequence of decreasing points $ \{ x \} > f(\{ x \}) > f^2(\{ x\}) >  \cdots$, which necessarily falls in $[c,b)$ at the end. The argumentation for $ \{ x \} \in [0,c)$ is similar, with now an increasing sequence of iterates.
See Figure \ref{fig:orbits}.
%For Assertion (3), a glance to Figure \ref{fig:red}  below shows by reduction modulo one that $F(x) \in X$ when $0 \le \{x\} <b$, so that we can choose $n=1$ in this case. When $b\le \{x \} <1$, observe that  $f_{\p}(\{x\})<\{x\}$, so that  some iterate  $f_{\p}^m(\{x\})$  fall down  in the interval $[0,b)$. Then, we can choose $n=m+1$.
\end{proof}

\begin{lemma}\label{lem:rotation}
$f$ has a well-defined rotation number, i.e., the following limit exists  and is independent of $x\in\R$, 
$$
\rho(f):=\lim_{n\to+\infty}\frac{F^n(x)}{n}\pmod 1.
$$
\end{lemma}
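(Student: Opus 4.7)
The plan is to exploit Lemma~\ref{lem:prop of F} in order to reduce the problem to points in the invariant set $X$, where $F$ behaves as a monotone lift of a circle map, and then to run a standard Fekete-type subadditivity argument on the displacement function $\phi_n(x):=F^n(x)-x$.

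First I would observe that $X$ is itself invariant under the shift $x\mapsto x+1$, since $c\le\{x+1\}<b$ if and only if $c\le\{x\}<b$. Combined with Lemma~\ref{lem:prop of F}(1)--(2), this yields the following monotonicity lemma: for any $y,z\in X$ with $y\le z\le y+1$, the three points $y,z,y+1$ all lie in $X$ and remain there under iteration of $F$, so strict monotonicity of $F$ on $X$ gives
\[
F^n(y)\le F^n(z)\le F^n(y+1)=F^n(y)+1
\]
for every $n\ge 0$. Subtracting $z-y$ yields $|\phi_n(z)-\phi_n(y)|\le 1$. Because $\phi_n(x+1)=\phi_n(x)$, the function $\phi_n$ is $1$-periodic on $X$, and by translating we conclude that the oscillation of $\phi_n$ on all of $X$ is bounded by $1$.

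Next I would fix a base point $x_0\in X$ (for instance $x_0=c$). Decomposing
\[
\phi_{m+n}(x_0)=\bigl(F^m(F^n(x_0))-F^n(x_0)\bigr)+\bigl(F^n(x_0)-x_0\bigr)=\phi_m(F^n(x_0))+\phi_n(x_0)
\]
and using that $F^n(x_0)\in X$, the oscillation bound from the previous step yields $|\phi_m(F^n(x_0))-\phi_m(x_0)|\le 1$, hence the quasi-additivity
\[
|\phi_{m+n}(x_0)-\phi_m(x_0)-\phi_n(x_0)|\le 1.
\]
A standard Fekete-type argument (applied to $\phi_n(x_0)+1$ and $\phi_n(x_0)-1$, which are subadditive and superadditive respectively) then shows that $\rho:=\lim_{n\to\infty}\phi_n(x_0)/n$ exists and is finite, and consequently $F^n(x_0)/n\to\rho$.

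Finally I would promote the convergence from $x_0$ to every $x\in\R$. For any $y\in X$, the oscillation bound $|\phi_n(y)-\phi_n(x_0)|\le 1$ gives $F^n(y)/n\to\rho$. For an arbitrary $x\in\R$, Lemma~\ref{lem:prop of F}(3) produces some $n_0\ge 0$ with $y:=F^{n_0}(x)\in X$; then for $n>n_0$,
\[
\frac{F^n(x)}{n}=\frac{n-n_0}{n}\cdot\frac{F^{n-n_0}(y)}{n-n_0}\xrightarrow[n\to\infty]{}\rho,
\]
so the limit exists for all $x$ and is independent of $x$. Reducing modulo $1$ defines $\rho(f)\in[0,1)$. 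The main subtlety is that $F$ is not globally increasing, so the classical Poincaré--Rhodes--Thompson argument cannot be applied directly; the role of $X$ as an absorbing strictly monotone subsystem is what allows the quasi-additivity to survive.
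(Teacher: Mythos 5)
Your argument is correct, but it takes a genuinely different route from the paper. The paper's proof also starts from Lemma~\ref{lem:prop of F}, but then constructs an auxiliary lift $\bar F$ that agrees with $F$ on $X$ and interpolates affinely across each gap $[b+p,\,c+p+1]$; since $\bar F$ is globally strictly increasing with $\bar F-\id$ one-periodic, the existence and $x$-independence of $\lim_n \bar F^n(x)/n$ is quoted directly from Rhodes--Thompson \cite[Corollary 1]{RT86}, and the limit is transferred to $F$ exactly as in your last step, using that every orbit is absorbed into $X$ where $F=\bar F$. You instead bypass the citation entirely: the key observation that $F|_X$ is strictly increasing, $F(X)\subseteq X$, and $X$ is invariant under integer translation gives the uniform oscillation bound $|\phi_n(z)-\phi_n(y)|\le 1$ on $X$, hence the quasi-additivity $|\phi_{m+n}(x_0)-\phi_m(x_0)-\phi_n(x_0)|\le 1$ and a Fekete argument. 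Both proofs are sound; yours is self-contained and elementary (essentially reproving the relevant fragment of \cite{RT86} on the absorbing set), while the paper's is shorter given the reference and keeps the construction $\bar F$ available as an honest monotone circle map, to which the further structural results of Rhodes--Thompson (e.g.\ rationality of $\rho$ versus existence of periodic orbits, continuity in parameters) apply directly. One small point worth making explicit in your write-up: reducing an arbitrary pair $y,z\in X$ to the configuration $y\le z\le y+1$ uses the $1$-periodicity of $\phi_n$ together with the integer-translation invariance of $X$, and the finiteness of the Fekete limit follows from the superadditive lower bound $\phi_n(x_0)-1$, so $\rho$ is a genuine real number before reduction modulo $1$.
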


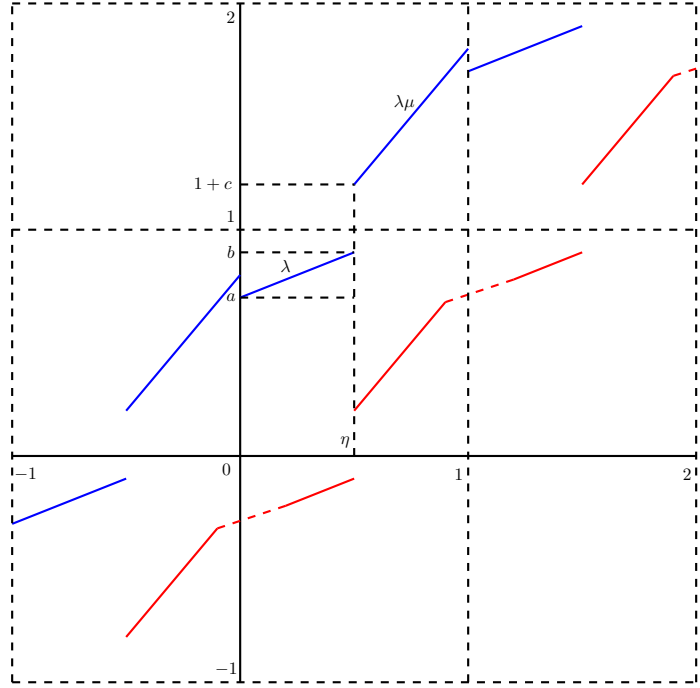
\begin{figure}[ht] 
\begin{tikzpicture}[thick,scale=0.6, every node/.style={scale=0.6}]
\draw (-5,0) -- (10,0);
\draw (0,-5) -- (0,10);
\draw[dashed,red]   (-0.5,3.4-5) -- (1,3.9-5);
\draw[thick,red]  (-2.5,-4) -- (-0.5,3.4-5);
\draw[thick,red]   (1,3.9-5) -- (2.5,4.5-5);
\draw[dashed,red]   (-0.5+5,3.4-5+5) -- (1+5,3.9-5+5);
\draw[thick,red]  (-2.5+5,-4+5) -- (-0.5+5,3.4-5+5);
\draw[thick,red]   (1+5,3.9-5+5) -- (2.5+5,4.5-5+5);

%\draw[dashed,red]   (9.5,8.4) -- (11,8.9);
\draw[dashed,red]   (9.5,8.4) -- (10,8.56);
\draw[thick,red]  (-2.5+5+5,-4+5+5) -- (-0.5+5+5,3.4-5+5+5);

\draw[thick,blue]   (0,3.5) -- (2.5,4.5);
\draw[thick,blue]   (-5,-1.5) -- (-2.5,-0.5);
\draw[thick,blue]   (5,8.5) -- (7.5,9.5);
\draw[thick,blue]  (2.5,6) -- (5,9);
\draw[thick,blue]  (-2.5,1) -- (0,4);
\node   at (-0.3,-0.3) {$0$};
\node   at (-0.3,-4.7) {$-1$};
\draw[dashed] (0,6) -- (2.5,6);
\draw[dashed] (-5,5) -- (10,5);
\draw[dashed] (-5,10) -- (10,10);
\draw[dashed] (-5,-5) -- (10,-5);
%\draw[dashed] (-5,-10) -- (5,-10);
%\draw[dashed] (0,15) -- (15,15);
\draw[dashed] (5,-5) -- (5,10);
\draw[dashed] (-5,-5) -- (-5,10);
\draw[dashed] (10,-5) -- (10,10);
%\draw[dashed] (15,0) -- (15,15);
\draw[dashed] (0,4.5) -- (2.5,4.5);
\draw[dashed] (0,3.5) -- (2.5,3.5);
\node at (1,4.2) {$\lambda$};
\node at (3.6,7.8) {$\lambda\mu$};
\node at (-0.2,4.5) {$b$};
%\draw[thick,red]  (-0.5,0) -- (-0.5,5);
\node   at (-0.6,6) {$1+c$};
\node at (2.3,0.3) {$\eta$};
\node   at (-0.2,5.3) {$1$};
\node   at (-0.2,9.7) {$2$};
\node   at (-0.2,3.5) {$a$};
%\draw[dashed]   (0,3) -- (5,3);
%\node   at (-1.7,3) {$\mu(\lambda + a - b)+c$};
\node   at (4.8,-0.4) {$1$};
\node   at (9.8,-0.4) {$2$};
\node   at (-4.7,-0.4) {$-1$};
\draw[dashed]  (2.5,0) -- (2.5,6);
\end{tikzpicture}
\caption{ A plot of $F$ in blue and of $\bar{F}-1$ in red.}
\label{figure of F}
\end{figure}

\begin{proof}
Using $F$, we define a new piecewise-affine function $\bar{F}\colon \R\to\R$ which equals $F$ inside $X$, but outside $X$ has a graph which is obtained by connecting with a straight segment the end points $(b+p, F(b^-)+p)$ and $(c+p+1,F(c)+p+1)$ for every  $p\in\Z$.  See Figure~\ref{figure of F}. It follows from  Lemma~\ref{lem:prop of F}, that the function $\bar{F}$ is strictly increasing and $\bar{F}-\id$ is $1$-periodic. Therefore, by \cite[Corollary 1]{RT86}, the limit  $\bar{\rho}:= \lim_{n\to+\infty}\frac{\bar{F}^n(x)}{n}\pmod{1}$ exists and is independent of $x$. 
%Moreover, $\bar{\rho}$ is independent of the lift $F$. Indeed, if $H$ is another lift of $f$, then $F$ and $H$ differ by an integer, and by construction, also $\bar{F}$ and $\bar{H}$ differ by an integer. This shows that $\bar{\rho}$ depends only on $f$. 
Now,  again by Lemma~\ref{lem:prop of F},  $F(X)\subset X$ and for every $x\in\R$ there is $n_0=n_0(x)\geq 0$ such that $F^{n_0}(x)\in X$.  Since $\bar{F}|_X=F|_X$, we conclude that $F^{n+n_0}(x)=\bar{F}^n(y)$ for every $n\geq0$ where $y=F^{n_0}(x)$. Therefore,
$$
\lim_{n\to+\infty}\frac{F^n(x)}{n}=\lim_{n\to+\infty}\frac{F^{n+n_0}(x)}{n+n_0}
=\lim_{n\to+\infty}\frac{\bar{F}^n(y)}{n+n_0}=\lim_{n\to+\infty}\frac{\bar{F}^n(y)}{n}\frac{n}{n+n_0}
=\bar{\rho}.
$$
\end{proof}

\section{The main results}\label{sec:results}

Given $0<\lambda<1$ and $\mu>0$,  let
$$
r_{\lambda,\mu}:=
\begin{cases}
1, &\text{if}\quad \lambda\mu<1\\
-\frac{\log\lambda}{\log\mu}, &\text{if} \quad\lambda\mu\geq1
\end{cases}.
$$
Notice that $0<r_{\lambda,\mu}\leq 1$.

\begin{definition}\label{def delta}Let $(\lambda,\mu,b,c)\in\QQ$ and assume that $0<\rho<r_{\lambda,\mu}$.  Set
%\begin{align*}
%\bsigma(\lambda,\mu,\rho) &:=  \sum_{k\geq1}\left(\lfloor (k+1)\rho\rfloor - \lfloor k\rho\rfloor\right)\lambda^k\mu^{\lfloor k\rho\rfloor} = \frac{1-\lambda}{\lambda\mu}\Psi_\rho(\lambda,\mu),\\
%\bdelta(\lambda,\mu,b,c,\rho)&:=\frac{(1-\lambda)(b+\bsigma(\lambda,\mu,\rho) (\mu b-c))}{1+(\mu-1)\bsigma(\lambda,\mu,\rho) }.
%\end{align*}
$$
\bdelta(\lambda,\mu,b,c,\rho):=\frac{(1-\lambda)(b+ (\mu b-c)\sigma)}{1+(\mu-1)\sigma },
$$
where 
$$
\sigma =\sigma(\lambda,\mu, \rho) :=  \sum_{k\ge 1} (\lf (k+1)\rho\rf -  \lf k\rho\rfloor )\lambda^k\mu^{ \lf k\rho\rf }  .
$$
\end{definition}

Note that, by \cite[Lemma 8]{LN21},  
$$
\Psi_\rho(\lambda,\mu) = {\lambda\mu\sigma(\lambda,\mu, \rho) \over 1-\lambda},
$$
where 
\begin{equation}\label{eq:Psi}
\Psi_\rho(\lambda,\mu):=\sum_{k\geq1}\sum_{1\leq h\leq k\rho}\lambda^k\mu^h,
\end{equation}
viewed as a power series in the two variables $\lambda$ and $\mu$,  is a Hecke-Mahler series.

 It will be proved in Proposition \ref{lem:delta} that the map $\rho\mapsto \bdelta(\lambda,\mu,b,c,\rho)$ is increasing in the interval $0< \rho < r_{\lambda,\mu}$ and that it has a left discontinuity at  any rational value and is right continuous everywhere.
Our main result is the following.

\begin{theorem}\label{maintheorem}
Let $(\lambda,\mu,b,c)\in\QQ$.  Then, the function $a\mapsto \rho(f_{\lambda,\mu,a,b,c})$ is  continuous and non-decreasing,  mapping the interval $(b-b\lambda,d_{\lambda,\mu,b,c})$ onto the interval $(0,r_{\lambda,\mu})$ and satisfies the following properties:
\begin{enumerate}
\item
For every irrational number $\rho$ with $0 < \rho< r_{\lambda,\mu}$, the rotation number
 $\rho(f_{\lambda,\mu,a,b,c})$ equals $\rho$ if and only if $a=\bdelta(\lambda,\mu,b,c,\rho)$;
\item
Let $p/q$ be a rational number with co-prime positive integers $p<q$ and $0< p/q< r_{\lambda,\mu}$. Then, 
$\rho(f_{\lambda,\mu,a,b,c})=p/q$  if and only if
$$
\bdelta(\lambda,\mu,b,c,(p/q)^-)\leq a\leq \bdelta(\lambda,\mu,b,c,p/q),
$$
where
$$
\bdelta(\lambda,\mu,b,c,(p/q)^-)=\frac{(1-\lambda)(b+ (\mu b-c)\sigma(\lambda,\mu,(p/q)^-)}{1+(\mu-1)\sigma(\lambda,\mu,(p/q)^-) }
$$
and
\begin{align*}
\sigma(\lambda,\mu,(p/q)^-)&=\sigma(\lambda,\mu,p/q))-\frac{\lambda^{q-1}\mu^{p-1}(1-\lambda)}{1-\lambda^q\mu^p},\\
\sigma(\lambda,\mu,p/q)&=\frac{1}{1-\lambda^q\mu^p}\sum_{k=1}^q \left(\left\lfloor (k+1)\frac{p}{q}\right\rfloor - \left\lfloor k\frac{p}{q}\right\rfloor\right)\lambda^k\mu^{\lfloor k\frac{p}{q}\rfloor}.
\end{align*}

\end{enumerate}
\end{theorem}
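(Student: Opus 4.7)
The strategy is to reduce to the subfamily $\P_0$ of Remark~\ref{rem:P0} (the case $b=1$, $c=0$), which coincides with the setting of \cite{LN21}, by an affine change of coordinates on the invariant interval $J=[c,b)$. Fix $(\lambda,\mu,b,c)\in\QQ$ and consider the increasing affine bijection $T\colon [0,1)\to [c,b)$, $T(y)=(b-c)y+c$. A direct computation shows that the conjugate map $g := T^{-1}\circ (f_\p|_J)\circ T$ equals $f_{(\lambda,\mu,a',1,0)}$, where
\begin{equation*}
a' \;=\; \frac{a-(1-\lambda)c}{b-c}.
\end{equation*}
In particular the break point $\eta$ is carried to $\eta'=(1-a')/\lambda$. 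Using identity \eqref{d identity}, the map $a\mapsto a'$ is an increasing affine bijection $(b-b\lambda,\,d_{\lambda,\mu,b,c})\to (1-\lambda,\,d_{\lambda,\mu})$, so $\p\in\P$ iff $(\lambda,\mu,a',1,0)\in\P_0$.

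Next, I would show that the rotation number is preserved: $\rho(f_\p)=\rho(g)$. Extend $T$ to a bijection $\widetilde{T}\colon\R\to X$ by $\widetilde{T}(y+k)=T(y)+k$ for $y\in[0,1)$, $k\in\Z$, and verify the intertwining relation $F_\p|_X\circ\widetilde{T}=\widetilde{T}\circ F_g$, where $F_\p$ and $F_g$ are the lifts defined in Section~\ref{sec:rotation}. Since $\widetilde{T}(y)-y$ is bounded on $\R$, this gives $\lim_n F_\p^n(\widetilde{T}(y))/n = \lim_n F_g^n(y)/n$. By Lemma~\ref{lem:rotation} we may compute $\rho(f_\p)$ from orbits starting in $X$, so $\rho(f_\p)=\rho(g)\pmod 1$, and since both numbers lie in $[0,1)$, they are equal.

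With the conjugation in hand, apply the main theorem of \cite{LN21} to $g$ to conclude that $a'\mapsto \rho(g)$ is continuous non-decreasing from $(1-\lambda,\,d_{\lambda,\mu})$ onto $(0,\,r_{\lambda,\mu})$, with $a'=\bdelta(\lambda,\mu,1,0,\rho)$ at irrational $\rho$ and the appropriate plateau description at rational $p/q$. Composing with the affine bijection $a\leftrightarrow a'$ transports continuity, monotonicity, and surjectivity to $a\mapsto \rho(f_\p)$ on $(b-b\lambda,\,d_{\lambda,\mu,b,c})$. Finally, under $a=(b-c)a'+(1-\lambda)c$, the formula $\bdelta(\lambda,\mu,1,0,\rho)=(1-\lambda)(1+\mu\sigma)/(1+(\mu-1)\sigma)$ simplifies algebraically to
\begin{equation*}
\bdelta(\lambda,\mu,b,c,\rho) \;=\; \frac{(1-\lambda)(b+(\mu b-c)\sigma)}{1+(\mu-1)\sigma}.
\end{equation*}
The explicit expressions for $\sigma(\lambda,\mu,p/q)$ and $\sigma(\lambda,\mu,(p/q)^-)$ in part (2) arise by summing the geometric progression coming from the period-$q$ relation $\lfloor (k+q)p/q\rfloor = \lfloor kp/q\rfloor + p$, together with the identification, as $\rho\uparrow p/q$, of the single term $k=q-1$ for which $\lfloor (k+1)\rho\rfloor$ jumps from $p-1$ to $p$.

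The main obstacle is the careful verification that $\widetilde{T}$ intertwines the two lifts: the lift $F_\p$ is not strictly increasing on all of $\R$ (only on $X$), and $\widetilde{T}$ is a bijection but not a homeomorphism (it has jump discontinuities of size $1-(b-c)$ at integers), so the intertwining must be checked directly on $X$ and combined with the fact that every orbit enters $X$ in finite time (Lemma~\ref{lem:prop of F}(3)). Once this is in place, the transfer of all the remaining conclusions from \cite{LN21}, including the algebraic rewriting of $\bdelta$ and the rational-case formulas, is routine.
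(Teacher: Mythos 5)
Your proposal follows essentially the same route as the paper: conjugate $f_{\p}|_J$ by the affine map $h(x)=(x-c)/(b-c)$ to the three-parameter map $f_{(\lambda,\mu,\Delta_{\lambda,b,c}(a),1,0)}$ of \cite{LN21} (the paper's $\Theta$ and Lemma~\ref{lem:reduction}), transfer the rotation number (Lemma~\ref{lem:6.2}), and pull back \cite[Theorem 3]{LN21} through the increasing affine bijection $a\mapsto a'$, with the algebraic identity $\bdelta(\lambda,\mu,b,c,\rho)=\Delta_{\lambda,b,c}^{-1}(\boldsymbol{\delta}(\lambda,\mu,\rho))$ giving the stated formulas. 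The only difference is that you spell out the lift-intertwining argument behind the invariance of the rotation number, which the paper leaves as a one-line consequence of its Lemmas~\ref{lem:prop of F} and~\ref{lem:reduction}.
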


As a consequence of Theorem~\ref{maintheorem} and a classical result due to Loxton and Van der Poorten  \cite{LoVdPA77,LoVdPB77}, we obtain the following result: 

\begin{corollary}\label{maincorollary} 
Let $(\lambda,\mu,a,b,c)\in\P$.  If $\lambda,\mu,b,c$ and $a$ are algebraic numbers,  then the rotation number $\rho(f_{\lambda, \mu,a,b,c})$ takes a rational value.
\end{corollary}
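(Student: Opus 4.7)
The plan is to argue by contradiction, reducing rationality of the rotation number to a transcendence statement for the two-variable Hecke--Mahler series $\Psi_\rho(\lambda,\mu)$ at algebraic points.

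Suppose, for contradiction, that $\rho:=\rho(f_{\lambda,\mu,a,b,c})$ is irrational. Since the rotation number lies in $(0,r_{\lambda,\mu})$ by Theorem~\ref{maintheorem}, part~(1) of that theorem forces
$$a\;=\;\bdelta(\lambda,\mu,b,c,\rho)\;=\;\frac{(1-\lambda)\bigl(b+(\mu b-c)\sigma\bigr)}{1+(\mu-1)\sigma},$$
where $\sigma:=\sigma(\lambda,\mu,\rho)$. I would then invert this affine relation in $\sigma$, obtaining
$$\sigma\;=\;\frac{(1-\lambda)b-a}{a(\mu-1)-(1-\lambda)(\mu b-c)}.$$
Both numerator and denominator are strictly negative: the former by the lower bound in \eqref{ineq2}, and the latter because it is the negative of $(1-\mu)a-(1-\lambda)(c-\mu b)$, which is strictly positive by the defining inequality \eqref{ineq3} of $\mathcal P$. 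Consequently $\sigma$ is well defined as a rational function of $(\lambda,\mu,a,b,c)$, and is an algebraic number under the standing hypothesis that these five parameters are all algebraic.

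Next, I would invoke the identity $\Psi_\rho(\lambda,\mu)=\lambda\mu\,\sigma/(1-\lambda)$ recalled just after Definition~\ref{def delta}. Together with algebraicity of $\sigma$, $\lambda$ and $\mu$, this shows that the value of the two-variable Hecke--Mahler series \eqref{eq:Psi} at the algebraic point $(\lambda,\mu)$ is itself algebraic. This contradicts the transcendence theorem of Loxton and van der Poorten \cite{LoVdPA77,LoVdPB77}, which asserts that for any irrational $\rho$ and any pair of algebraic numbers $(\lambda,\mu)$ lying in the domain of convergence (ensured here by $0<\lambda<1$ and $0<\rho<r_{\lambda,\mu}$, i.e.\ $\lambda\mu^{\rho}<1$), the value $\Psi_\rho(\lambda,\mu)$ is transcendental. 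Hence $\rho(f_{\lambda,\mu,a,b,c})$ cannot be irrational, and the corollary follows.

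The only non-routine ingredient is the invocation of the Loxton--van der Poorten theorem in the two-variable setting, a transcendence statement already exploited by the same authors in \cite{LN18,LN21}; the remainder is a purely algebraic inversion of the formula in Theorem~\ref{maintheorem}. The single point requiring verification is the non-vanishing of the denominator when one solves for $\sigma$, and this is precisely what the strict form of the inequality \eqref{ineq3} defining $\mathcal P$ provides.
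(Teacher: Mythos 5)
Your proposal is correct and follows essentially the same route as the paper: assume $\rho$ irrational, use Theorem~\ref{maintheorem}(1) to express $a$ in terms of $\sigma$ (equivalently $\Psi_\rho(\lambda,\mu)$), invert to conclude that $\Psi_\rho(\lambda,\mu)$ is algebraic, and contradict the Loxton--van der Poorten transcendence theorem (cited in the paper as \cite[Theorem 10]{LN21}). Your explicit check that the denominator $a(\mu-1)-(1-\lambda)(\mu b-c)$ is nonzero via the strict inequality \eqref{ineq3} is a welcome detail that the paper leaves implicit.
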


Given a quintuple $\p=(\lambda,\mu,a,b,c)\in\mathcal{P}$, let $f=f_{\p}$ and 
$$
C=C_{\p}:=\bigcap_{n\geq0}f^n(I)
$$
be the \textit{limit set} of $f$ and, given $x\in I$, 
$$
\omega(x)=\omega(f_{\p},x):=\bigcap_{n\geq0}\overline{\bigcup_{k\geq n} f^k(x)}
$$
be the \textit{$\omega$-limit set of $x$ under $f$.} 

\begin{definition}\label{def:phi} Let $\p=(\lambda,\mu,a,b,c)\in \mathcal{P}$ and  $0< \rho <1$ be such that $\lambda\mu^\rho<1$. 
Let $\phi=\phi_{\p,\rho} \colon \R\to\R$ be defined by
$$
\phi(y)=\lfloor y \rfloor +\frac{a}{1-\lambda}  +\frac{(1-\lambda)(c-\mu b) + a(\mu-1)}{\lambda}\Phi_\rho(\lambda,\mu,-\{y\}),
$$
where
$$
\Phi_{\rho}(\lambda,\mu,y)=\sum_{k\geq 0}\sum_{0\leq l< k\rho+y}\lambda^k\mu^l,
$$
with the convention that a sum indexed by an empty set equals zero. 
\end{definition}
The following result describes the dynamics of $f$ on the limit set $C$.

\begin{theorem}\label{conjugacy}
Let $\p = (\lambda,\mu,a,b,c)\in\P$ and let  $\rho = \rho(f_{\p})$ be the rotation number of $f_{\p}$. Set $\phi=\phi_{\p,\rho}$ where $\phi_{\p,\rho}$ is defined in Definition~\ref{def:phi}. The following holds:
\begin{enumerate}
\item If $\rho$ is irrational, then $C=\phi(I)$, $\overline{C}$ is a Cantor set and $\omega(x)=\overline{C}$ for every $x\in I$. Moreover, $f|_C$ is conjugated by $\phi$ to the rotation $R_\rho\colon x\mapsto x+\rho\pmod{1}$, i.e., the following diagram commutes:

\begin{center}
\begin{tikzcd}
I \arrow{r}{R_\rho} \arrow[swap]{d}{\phi} & I \arrow{d}{\phi} \\%
C \arrow{r}{f}& C\,.
\end{tikzcd}
\end{center}

\item If $\rho=p/q$ is rational with $p$ and $q$ co-prime positive  integers and 
$$
\bdelta(\lambda,\mu,b,c,(p/q)^-)\leq a< \bdelta(\lambda,\mu,b,c,p/q),
$$
then, for every $x\in I$, 
$$
C=\omega(x)=\phi(I)=\{\phi(m/q)\colon 0\leq m<q\}
$$
is a cycle of order $q$ and the following diagram commutes,

\begin{center}
\begin{tikzcd}
\{m/q\colon 0\leq m<q\} \arrow{r}{R_{p/q}} \arrow[swap]{d}{\phi} & \{m/q\colon 0\leq m<q\} \arrow{d}{\phi} \\%
C \arrow{r}{f}& C\,,
\end{tikzcd}
\end{center}
where $R_{p/q}$ denotes the rotation of angle $p/q$. 

\item When $a= \bdelta(\lambda,\mu,b,c,p/q)$, the limit set $C$ is empty, $\phi(I)$ is a finite set containing $b$ and $\omega(x)=\phi(I)$ for every $x\in I$.
\end{enumerate}

\end{theorem}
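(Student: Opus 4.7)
The plan is to reduce $f_{\p}$ to a map in the restricted family $\P_0$ of Remark \ref{rem:P0}, already treated in \cite{LN21}, and then transfer the resulting semi-conjugacy back. Since $J = [c, b)$ is forward invariant under $f_{\p}$ and $f_{\p}|_J$ is injective by \eqref{injectivity}, the affine bijection $h \colon J \to [0, 1)$, $h(x) = (x-c)/(b-c)$, conjugates $f_{\p}|_J$ to a two-interval increasing piecewise affine map $\tilde f$ on $[0,1)$ with the same slopes $\lambda, \lambda\mu$; its parameters, computed through identity \eqref{d identity}, place $\tilde f$ in $\P_0$, and it has the same rotation number $\rho = \rho(f_{\p})$. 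By Lemma \ref{lem:prop of F}(3), every $f_{\p}$-orbit enters $J$ in finitely many steps, so the asymptotic dynamics on $I$ are entirely governed by those of $\tilde f$ on $[0,1)$.

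Next, I would apply the results of \cite{LN21} to $\tilde f$ to obtain a function $\tilde\phi$ satisfying $\tilde f \circ \tilde\phi = \tilde\phi \circ R_\rho$ on the limit set of $\tilde f$, together with the trichotomy: Cantor minimal set for irrational $\rho$; attracting periodic cycle of order $q$ when $\rho = p/q$ lies strictly inside its plateau; degenerate configuration at the upper endpoint $a = \bdelta(\lambda,\mu,b,c,p/q)$. Setting $\phi := h^{-1} \circ \tilde\phi$, suitably lifted to $\R$, a bookkeeping computation using $\eta = (b-a)/\lambda$ and the definition of $\Phi_\rho$ shows that $\phi$ coincides with $\phi_{\p,\rho}$ from Definition \ref{def:phi}: the constant term $a/(1-\lambda)$ is the fixed point of the left branch of $f_{\p}$, while the prefactor $[(1-\lambda)(c-\mu b) + a(\mu-1)]/\lambda$ equals $-(f_{\p}(c) - f_{\p}(b^-))/\lambda$, precisely the scaling that converts the normalized $\P_0$-coding back to $J$. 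The three assertions then follow by transfer: for (1) the pullback of the Cantor minimal set of $\tilde f$, combined with the escape-to-$J$ property of Lemma \ref{lem:prop of F}(3), gives $C = \phi(I)$, $\overline C$ Cantor, $\omega(x) = \overline C$ for every $x \in I$, and the commutative diagram; for (2) the unique attracting periodic cycle of $\tilde f$, whose basin is all of $[0,1)$, pulls back to the required cycle $\phi(\{m/q : 0 \le m < q\})$.

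The main obstacle will be the boundary case (3). At $a = \bdelta(\lambda,\mu,b,c,p/q)$ the inequality \eqref{injectivity} becomes the equality $f_{\p}(c) = f_{\p}(b^-)$, so $f_{\p}$ sends $J$ surjectively onto $J$ and the candidate periodic orbit has $b \notin J$ as a one-sided accumulation. One must then show that $C = \bigcap_n f_{\p}^n(I) = \emptyset$ while $\omega(x) = \phi(I)$ remains a finite set containing $b$, by tracking how the decreasing family of half-open intervals $f_{\p}^n(I)$ shrinks, with right endpoints traversing the would-be periodic orbit that approaches but never attains $b$. The corresponding extremal description at the upper plateau endpoint in \cite{LN21}, transferred through $h$, should supply the needed input; the delicate point is verifying that the accumulation at $b$ is detected by $\omega(x)$ (as it is a closure) but not by the intersection $C$ (whose elements must lie in every $f_{\p}^n(I)$), which is what reconciles the apparently paradoxical assertion of empty $C$ and nonempty $\phi(I) \ni b$.
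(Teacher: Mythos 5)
Your proposal follows essentially the same route as the paper: reduce $f_{\p}|_J$ to a map in $\P_0$ via the affine conjugacy $h$, invoke the corresponding theorem of \cite{LN21} for the three cases, and pull the conjugacy back through $h^{-1}$, checking that $h^{-1}\circ\varphi$ coincides with $\phi_{\p,\rho}$ of Definition~\ref{def:phi} (your identifications of the constant term and prefactor are correct). One factual slip in your discussion of case (3): at $a=\bdelta(\lambda,\mu,b,c,p/q)$ the inequality \eqref{injectivity} does \emph{not} become an equality and $f_{\p}$ does not map $J$ onto $J$, since by Proposition~\ref{lem:delta} this value of $a$ lies strictly inside $(b-b\lambda,d_{\lambda,\mu,b,c})$, so $f_{\p}|_J$ is still strictly injective with $f_{\p}(c)>f_{\p}(b^-)$; the actual degeneracy is that the would-be periodic cycle contains the point $b$, which is never attained because $b\notin f_{\p}(J)$ (it is only a left limit $f_{\p}(\eta^-)$), and the remainder of your description of why $C=\emptyset$ while $b\in\omega(x)$ is the correct mechanism.
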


%The paper is organized as follows. In Section 2, we define two Hecke-Mahler series which are denoted by $\Psi_\rho(\lambda,\mu)$ and $\Phi_{\rho}(\lambda,\mu,y)$ and we prove its main properties in Proposition 1 and Proposition 2. In Section 3, we introduce the conjugacy function $\phi$, i.e. the function that conjugates $f$ and the rotation function. In Section 4, we introduce the lift function $F$ of the interval map $f$ and prove its main properties in  Theorem 4.1. The proof of Theorem 1.3 is left to Section 5. In Section 6, we prove Corollary 1.4. In Section 7, we conclude the paper with an example of a class of  $3$-interval  orientation-preserving  circle homeomorphisms  whose rotation number is known.

\section{Reduction of parameters}\label{sec:reduction}

The overall idea of the proof of the results in Section \ref{sec:results} is that, for any $\p\in \P$, the dynamics of the map $f_{\p}$ is determined by its restriction $f_{\p}|_J$ to the invariant interval $J:=[c,b)$. It turns out that, up to an isomorphism,   the dynamics of  $f_{\p}|_J$ has already been studied in \cite{LN21}. See Figure \ref{fig:red} below.

Define the map $\Theta\colon \P\to \R^5$  by
$$
\Theta(\lambda,\mu,a,b,c)= \left(\lambda,\mu, \Delta_{\lambda,b,c}(a),1,0\right),
$$
where 
\begin{equation}\label{eq:Delta}
\Delta_{\lambda,b,c}(a):=\frac{ a -c(1-\lambda)}{b-c}.
\end{equation}
A simple computation shows that
\begin{equation}\label{identity Delta}
 \Delta_{\lambda,b,c}(b-b\lambda)=1-\lambda\quad \text{and}\quad \Delta_{\lambda,b,c}(d_{\lambda,\mu,b,c})=d_{\lambda,\mu}
\end{equation}
for every $(\lambda,\mu,b,c)\in\QQ$. Notice that the second equality follows from the identity \eqref{d identity}.

Let $h\colon J\to I$ be the affine map $x\mapsto (x-c)/(b-c)$.  Recall that $\P_0$ is the set of parameters defined in \eqref{def:P'0} which coincides with  the set of parameters defined in \cite{LN21}.

\begin{lemma}\label{lem:reduction}
$\Theta$ is a projection of $\P$ onto $\P_0$. Moreover, if $\p\in \P$,   then $f_{\p}(J)\subset J$ and the following diagram commutes,
\begin{center}
\begin{tikzcd}
J \arrow{r}{{f_{\p}}|_{J}} \arrow[swap]{d}{h} & J \arrow{d}{h} \\%
I \arrow{r}{f_{\Theta(\p)}}& I\,.
\end{tikzcd}
\end{center}
\end{lemma}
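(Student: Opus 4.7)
The plan is to verify the three claims of the lemma in order: that $\Theta$ projects $\P$ onto $\P_0$, that the interval $J = [c,b)$ is forward-invariant under $f_{\p}$, and that the conjugation by $h$ intertwines $f_{\p}|_J$ with $f_{\Theta(\p)}$. Each step is a short algebraic verification once the right identity is isolated.

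For the first claim, fix $\p = (\lambda,\mu,a,b,c) \in \P$ and write $\p' = \Theta(\p) = (\lambda,\mu,a',1,0)$ with $a' = \Delta_{\lambda,b,c}(a)$. The map $a \mapsto \Delta_{\lambda,b,c}(a)$ is affine with positive slope $1/(b-c)$, and the identities in \eqref{identity Delta} translate the bounds $b - b\lambda < a < d_{\lambda,\mu,b,c}$ coming from the definition of $\P$ into $1 - \lambda < a' < d_{\lambda,\mu}$. By Remark \ref{rem:P0} this shows $\p' \in \P_0$. Moreover, when $b = 1$ and $c = 0$, the formula \eqref{eq:Delta} reduces to $\Delta_{\lambda,1,0}(a) = a$, so $\Theta$ restricts to the identity on $\P_0$; this gives the projection property. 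For $f_{\p}(J) \subseteq J$, I would run a branch-by-branch check: on $[c,\eta)$ the image $[\lambda c + a, b)$ lies in $J$ because $a > b - b\lambda > (1-\lambda)c$ by \eqref{ineq2} and $\lambda \eta + a = b$ by the definition of $\eta$; on $[\eta,b)$ the image starts at $f_{\p}(\eta) = c$ and ends strictly below $b$, by \eqref{injectivity} together with the upper bound $a < b - c\lambda$ from \eqref{ineq2}.

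The main algebraic point of the proof, and the step I expect to be the key obstacle, is that the change of variable $h$ sends the breakpoint $\eta$ of $f_{\p}$ to the breakpoint $\eta' = (1 - a')/\lambda$ of $f_{\p'}$. A direct substitution gives
$$
\eta' = \frac{b - a - c\lambda}{\lambda(b-c)} = h(\eta),
$$
so that $h$ correctly routes each half of $J$ to the matching half of $I$. Once this is established, the commutation of the diagram on each subinterval reduces to verifying $h(\lambda x + a) = \lambda h(x) + a'$ and $h(\lambda\mu(x - \eta) + c) = \lambda\mu(h(x) - \eta')$, both of which follow immediately from the formulas $h(x) = (x-c)/(b-c)$ and $a' = (a - c(1-\lambda))/(b-c)$.
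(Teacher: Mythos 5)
Your proposal is correct and follows essentially the same route as the paper: translate the bounds on $a$ through the increasing affine map $\Delta_{\lambda,b,c}$ using \eqref{identity Delta} and Remark~\ref{rem:P0}, note that $\Theta$ is the identity on $\P_0$, observe $f_{\p}(\eta)=c$ and $f_{\p}(\eta^-)=b$ to get invariance of $J$, and verify the conjugacy by direct substitution. You simply spell out the breakpoint computation $h(\eta)=(1-a')/\lambda$ and the two affine identities that the paper leaves as "a simple exercise," and all of those computations check out.
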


\begin{proof}

Let $\p\in \P$.  By  Remark~\ref{rem:P0},  $\Theta(\p)\in \P_0$ if and only if
%$(\lambda,\mu,1,0)\in \QQ$ and 
$\Delta_{\lambda,b,c}(a)\in (1-\lambda,d_{\lambda,\mu})$. 
%That, $(\lambda,\mu,1,0)\in \QQ$ is trivial. 
Since  $b-b\lambda<a<d_{\lambda,\mu,b,c}$ and the fact that $a\mapsto \Delta_{\lambda,b,c}(a)$ is increasing, we see that $\Delta_{\lambda,b,c}(b-b\lambda)<\Delta_{\lambda,b,c}(a)<\Delta_{\lambda,b,c}(d_{\lambda,\mu,b,c})$. By \eqref{identity Delta}  we conclude that $1-\lambda<\Delta_{\lambda,b,c}(a)<d_{\lambda,\mu}$.
Therefore, $\Theta(\p)\in \P_0$. Because $\Theta(\Theta(\p))=\Theta(\p)$, we see that $\Theta$ is a projection of $\P$ onto $\P_0$. 
Now, taking into account that $\eta = (b-a)/\lambda$, we have that
$$
f_{\p}(\eta)=c<\eta<b=f_{\p}(\eta^-).
$$
Therefore,  $f_{\p}(J)\subset J$ (see Figure~\ref{fig:red}).  Finally, checking that the diagram above commutes is a simple exercise.
\end{proof}

\begin{figure}[h]
\begin{tikzpicture}

\draw[thick,red] (7,0) -- (12,0);
\draw[thick,red] (7,0) -- (7,5);
\draw[thick,red] (12,0) -- (12,5);
\draw[thick,red] (7,5) -- (12,5);

\draw[dashed,red] (1,1) -- (7,0);
\draw[dashed,red] (4.5,1) -- (12,0);
\draw[dashed,red] (4.5,4.5) -- (12,5);
\draw[dashed,red] (1,4.5) -- (7,5);

\draw[dashed,black]   (9.5,0) -- (9.5,5);
\draw[thick,black]   (7,4) -- (9.5,5);
\draw[thick,black]  (9.5,0) -- (12,3.42);
\node   at (6.8,-0.3) {$0$};
\node   at (12,-0.4) {$1$};
\node at (7.9,4.1) {$\lambda$};
\node at (10.6,2.2) {$\lambda\mu$};
\node at (9.5,-0.4)   {$\frac{\eta-c}{b-c}$};
\node   at (6.8,5) {$1$};
\node   at (6.2,4.0) {$\frac{a-c(1-\lambda)}{b-c}$};

\draw (0,0) -- (5,0);
\draw (0,0) -- (0,5);
\draw[thick,red] (1,1) -- (4.5,1);
\draw[thick,red] (4.5,1) -- (4.5,4.5);
\draw[thick,red] (1,1) -- (1,4.5);
\draw[thick,red] (1,4.5) -- (4.5,4.5);

\draw[dotted] (0,0) -- (5,5);
\draw[thick]   (0,3.5) -- (2.5,4.5);
\draw[thick]  (2.5,1) -- (5,4);
\node   at (-0.3,-0.3) {$0$};
\draw[dashed] (0,1) -- (2.5,1);
\draw[dashed] (0,5) -- (5,5);
\draw[dashed] (5,0) -- (5,5);
\draw[dashed] (0,4.5) -- (2.5,4.5);
%\draw[dashed] (0,3.5) -- (2.5,3.5);
\node at (0.7,4.1) {$\lambda$};
\node at (3.6,2.8) {$\lambda\mu$};
\node at (-0.3,4.5) {$b$};
\node   at (-0.3,1) {$c$};
\node at (2.5,-0.6) {$\eta = \frac{b-a}{\lambda}$};
\node   at (-0.3,5) {$1$};
\node   at (-0.3,3.5) {$a$};
%\draw[dashed]   (0,3) -- (5,3);
%\node   at (-1.7,3) {$\mu(\lambda + a - b)+c$};
\node   at (5,-0.4) {$1$};
\draw[dashed]  (2.5,0) -- (2.5,4.5);

\node   at (2.5,-1.5) {$(A)$};
\node   at (9.5,-1.5) {$(B)$};
\end{tikzpicture}
\caption{ (A) Plot of $f_{\p}$ and the square $J^2$ in red.  (B) Zoom of the square $J^2$ using the affine map $h$ and plot of $f_{\Theta(\p)}$.}
\label{fig:red}
\end{figure}
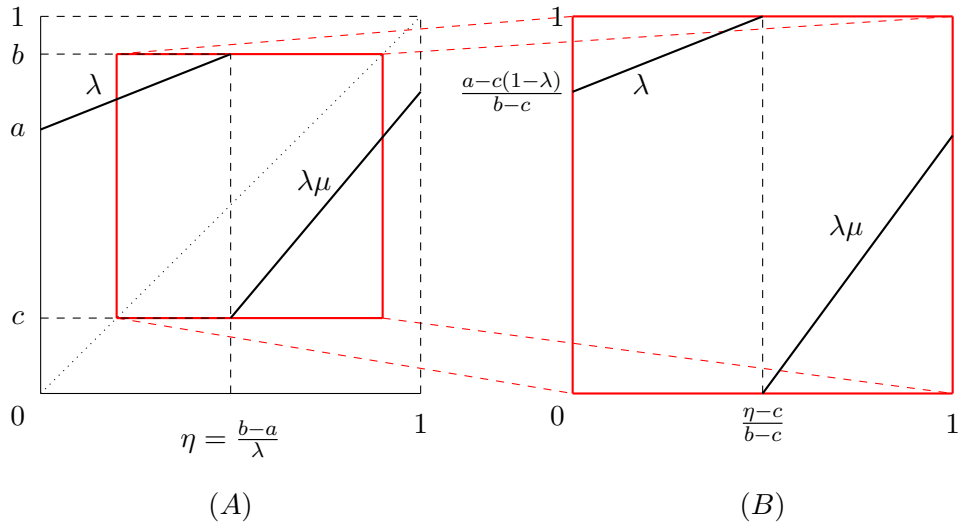

%\begin{remark}\label{remark of reduction1} 
%By Lemma~\ref{lem:reduction},  $\Theta(\p)\in \P_0$ for every $\p=(\lambda,\mu,a,b,c) \in \P$.  In particular,   and as a consequence of the discussion in Remark~\ref{rem:P0},  we have that $1-\lambda<\Delta_{\lambda,b,c}(a)<d_{\lambda,\mu}$.
%\end{remark}

Let $\p=(\lambda,\mu,a,b,c)\in \P$.  Using Lemma~\ref{lem:reduction}, we can reduce the study of the 2-interval piecewise affine map $f_{\p}$ to the study of the 2-interval piecewise affine map $f_{\Theta(\p)}$ that has just 3 parameters. Because the rotation number is invariant by conjugacy we have the following result.

\begin{lemma} \label{lem:6.2}
Let $\p=(\lambda,\mu,a,b,c)\in \P$.  Then $\rho(f_{\p})=\rho(f_{\Theta(\p)})$. 
\end{lemma}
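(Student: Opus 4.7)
The plan is to compute both rotation numbers along orbits living in the invariant interval $J=[c,b)$ and to show that the two lifts accumulate the same winding numbers step by step. By Lemma~\ref{lem:rotation}, $\rho(f_{\p})$ is independent of the starting point, so I may choose $x_{0}\in J$; then $x_{0}\in X$, and Lemma~\ref{lem:prop of F} guarantees that $F^{n}(x_{0})\in X$ for every $n\geq 0$, where $F=F_{\p}$. Setting $y_{0}=h(x_{0})=(x_{0}-c)/(b-c)\in I$ and $G=F_{\Theta(\p)}$, the commutative diagram of Lemma~\ref{lem:reduction} yields $\{G^{n}(y_{0})\}=h(\{F^{n}(x_{0})\})$ for every $n\geq 0$.

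The key step is to prove by induction on $n\geq 0$ the identity $\lfloor F^{n}(x_{0})\rfloor=\lfloor G^{n}(y_{0})\rfloor$. The base case is immediate. For the inductive step, set $z=F^{n}(x_{0})$, so that $\{z\}\in[c,b)$. A direct inspection of the piecewise formula defining $F$ shows that
$$
\lfloor F(z)\rfloor-\lfloor z\rfloor=\begin{cases}0,&\text{if }\{z\}<\eta,\\ 1,&\text{if }\{z\}\geq\eta,\end{cases}
$$
where injectivity of $f_{\p}|_{J}$ is used to bound $f_{\p}(\{z\})$ below $b$ in the second case. The analogous dichotomy holds for $G$ with its own break point $\tilde\eta$. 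A short computation from the definition of $\Delta_{\lambda,b,c}$ and the expression $\eta=(b-a)/\lambda$ gives $\tilde\eta=h(\eta)$; since $h$ is an orientation-preserving affine bijection from $[c,b)$ onto $[0,1)$, the condition $\{z\}<\eta$ is equivalent to $\{G^{n}(y_{0})\}=h(\{z\})<h(\eta)=\tilde\eta$, so the two winding increments coincide and the induction closes.

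Finally, since $\{F^{n}(x_{0})\}\in[c,b)$ and $\{G^{n}(y_{0})\}\in[0,1)$ are uniformly bounded,
$$
\lim_{n\to\infty}\frac{F^{n}(x_{0})}{n}=\lim_{n\to\infty}\frac{\lfloor F^{n}(x_{0})\rfloor}{n}=\lim_{n\to\infty}\frac{\lfloor G^{n}(y_{0})\rfloor}{n}=\lim_{n\to\infty}\frac{G^{n}(y_{0})}{n},
$$
and reducing modulo $1$ yields $\rho(f_{\p})=\rho(f_{\Theta(\p)})$. The only substantive verification is the matching of break points $\tilde\eta=h(\eta)$; everything else is straightforward bookkeeping on the integer parts of the lifts, combined with Lemmas~\ref{lem:prop of F} and \ref{lem:reduction}.
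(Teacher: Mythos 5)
Your proof is correct and follows essentially the same route as the paper, which simply invokes Lemma~\ref{lem:prop of F} and Lemma~\ref{lem:reduction} (conjugacy by $h$ on the invariant interval $J$, plus the fact that the rotation number can be computed along orbits in $X$). Your explicit bookkeeping of the integer parts of the lifts, including the verification that the break points correspond under $h$ (i.e.\ $\tilde\eta=h(\eta)$, visible in Figure~\ref{fig:red}), is a sound and complete expansion of the one-line argument the paper leaves implicit.
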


\begin{proof}
Follows from Lemma~\ref{lem:prop of F} and Lemma~\ref{lem:reduction}.
\end{proof}

The map $f_{\Theta(\p)}$ belongs to a family of maps already studied in \cite{LN21} and whose rotation number has been described as a function of the parameters $(\lambda,\mu,\delta)$ where
$
\delta=\Delta_{\lambda,b,c}(a).
$

\section{Properties of $\bdelta$ and $\phi$}\label{sec:properties}

Recalling Definition~\ref{def delta}, the following result describes $\bdelta$ as a function of $\rho$. 

\begin{proposition}\label{lem:delta}
Let $(\lambda,\mu,b,c)\in\QQ$ and assume that $0<\rho<r_{\lambda,\mu}$.  The function $\rho\mapsto \bdelta(\lambda,\mu,b,c,\rho)$ is strictly increasing and right continuous on the interval $(0,r_{\lambda,\mu})$, continuous at every irrational $\rho\in(0,r_{\lambda,\mu})$ and maps the interval $(0,r_{\lambda,\mu})$ inside the interval $(b-b\lambda,d_{\lambda,\mu,b,c})$,  with limit values
$$
\bdelta(\lambda,\mu,b,c,0^+)=b-b\lambda\quad\text{and}\quad \bdelta(\lambda,\mu,b,c,r_{\lambda,\mu}^-)=d_{\lambda,\mu,b,c}.
$$
\end{proposition}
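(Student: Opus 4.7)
My approach is to reduce the statement to the special case $(b,c)=(1,0)$ --- that is, to the subset $\P_0$ of Remark~\ref{rem:P0}, which is treated in \cite{LN21} --- via the affine change of variable $\Delta_{\lambda,b,c}$ introduced in Section~\ref{sec:reduction}. The first step is to establish the key algebraic identity
$$
\bdelta(\lambda,\mu,b,c,\rho) \;=\; c(1-\lambda) \;+\; (b-c)\,\bdelta(\lambda,\mu,1,0,\rho),
$$
equivalently $\Delta_{\lambda,b,c}\bigl(\bdelta(\lambda,\mu,b,c,\rho)\bigr) = \bdelta(\lambda,\mu,1,0,\rho)$. This follows from a direct substitution: putting the right-hand side over the common denominator $1+(\mu-1)\sigma$, the numerator rewrites as $(1-\lambda)\bigl[b + (c(\mu-1)+(b-c)\mu)\sigma\bigr]$, which collapses to $(1-\lambda)(b+(\mu b - c)\sigma)$ via the elementary identity $c(\mu-1)+(b-c)\mu = \mu b - c$.

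Next, I invoke the corresponding result for the parameter set $\P_0$ established in \cite{LN21}: the map $\rho \mapsto \bdelta(\lambda,\mu,1,0,\rho)$ is strictly increasing and right-continuous on $(0,r_{\lambda,\mu})$, continuous at every irrational of this interval, takes values in $(1-\lambda,d_{\lambda,\mu})$, and has limit values $1-\lambda$ at $0^+$ and $d_{\lambda,\mu}$ at $r_{\lambda,\mu}^-$. This is the analytic heart of the matter and is the main obstacle: it rests on the behaviour of the Hecke--Mahler series through the relation $\Psi_\rho(\lambda,\mu) = \lambda\mu\sigma/(1-\lambda)$ recalled after Definition~\ref{def delta}, and in my plan this substantive content is imported wholesale from \cite{LN21} rather than reproved.

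Finally, by \eqref{identity Delta} the map $\Delta_{\lambda,b,c}\colon a \mapsto (a-c(1-\lambda))/(b-c)$ is a strictly increasing affine bijection from $(b-b\lambda,d_{\lambda,\mu,b,c})$ onto $(1-\lambda,d_{\lambda,\mu})$ with inverse $y \mapsto c(1-\lambda)+(b-c)y$. Composing this order-preserving bijection with the assertion of the second step and using the identity of the first step, every property --- strict monotonicity, right-continuity, continuity at irrationals, range inclusion in $(b-b\lambda,d_{\lambda,\mu,b,c})$, and the boundary limits $b-b\lambda$ at $0^+$ and $d_{\lambda,\mu,b,c}$ at $r_{\lambda,\mu}^-$ --- transfers directly from $\bdelta(\lambda,\mu,1,0,\cdot)$ to $\bdelta(\lambda,\mu,b,c,\cdot)$, completing the proof.
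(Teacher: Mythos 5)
Your proposal is correct and follows essentially the same route as the paper: the paper's proof likewise writes $\bdelta(\lambda,\mu,b,c,\rho)=\Delta_{\lambda,b,c}^{-1}(\boldsymbol{\delta}(\lambda,\mu,\rho))=(b-c)\boldsymbol{\delta}(\lambda,\mu,\rho)+c(1-\lambda)$ with $\boldsymbol{\delta}(\lambda,\mu,\rho)=\bdelta(\lambda,\mu,1,0,\rho)$, imports the monotonicity, continuity and limit properties of $\boldsymbol{\delta}$ from \cite{LN21}, and transfers them through the increasing affine bijection using \eqref{identity Delta}. No issues.
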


\begin{figure}[ht]
\begin{tikzpicture}
\node at (0,5)
{\includegraphics[width=.50\textwidth]{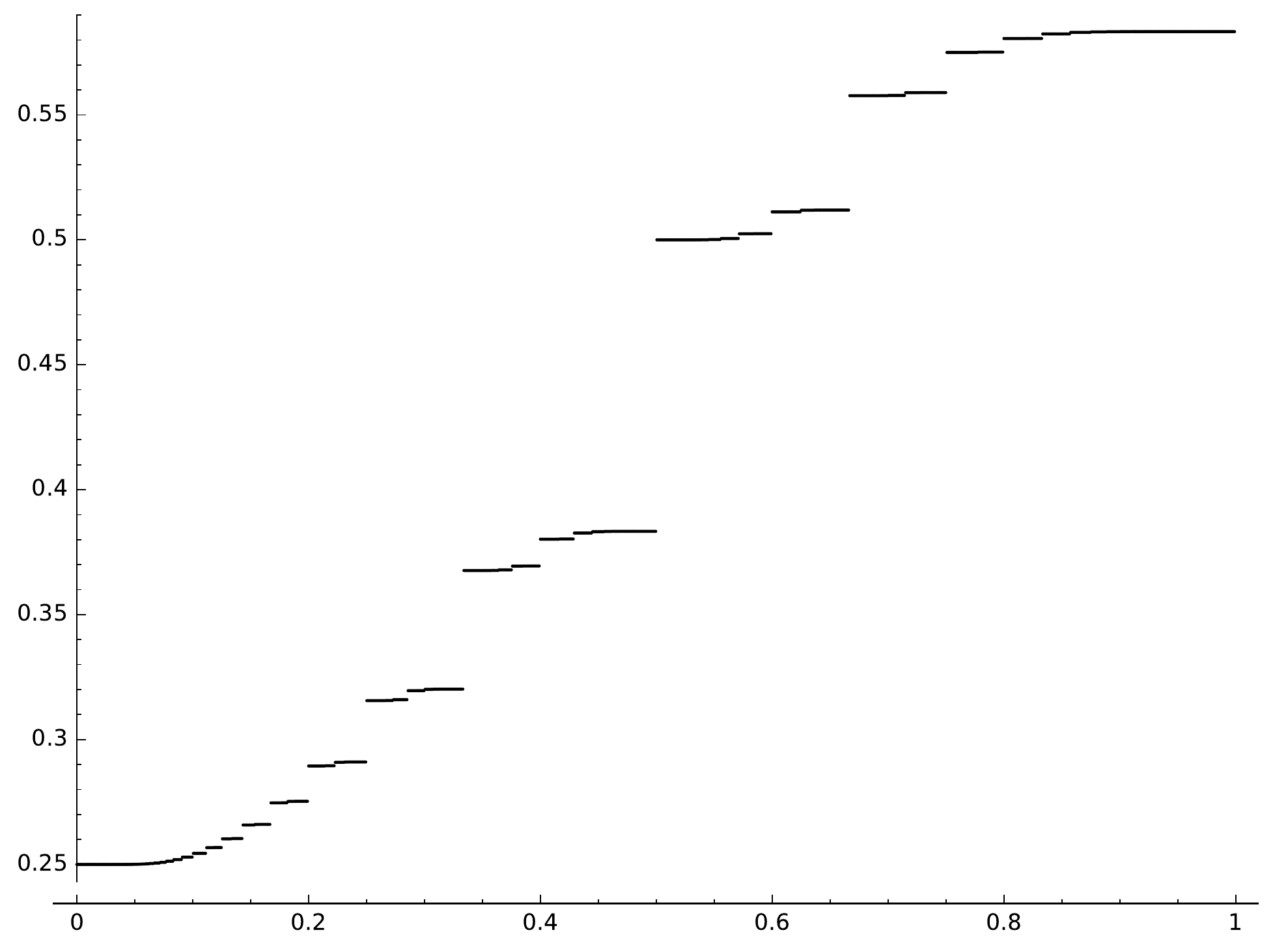}} ;
\end{tikzpicture}
\caption{Plot of the map $\rho \mapsto \bdelta(2/3,1/2,3/4, 1/4, \rho)$  for $\rho \in (0,1)$. The image is contained in the interval $ (b-b\lambda,b-c\lambda)=(1/4,7/12)$.}
\end{figure}

\begin{proof}
Recalling $\Delta_{\lambda,b,c}$ from \eqref{eq:Delta}, notice that
\begin{equation}\label{eq:relation delta and a}
\bdelta(\lambda,\mu,b,c,\rho)=\Delta_{\lambda,b,c}^{-1}(\boldsymbol{\delta}(\lambda,\mu,\rho))= \boldsymbol{\delta}(\lambda,\mu,\rho)(b-c)+c(1-\lambda)
\end{equation}
where 
$$
\boldsymbol{\delta}(\lambda,\mu,\rho)=\frac{(1-\lambda)(1+\mu\sigma(\lambda,\mu,\rho))}{1+(\mu-1)\sigma(\lambda,\mu,\rho)}.
$$
By \cite[Corollary on page  40]{LN21},  for every $0<\lambda<1$ and $\mu>0$, the function $(0,r_{\lambda,\mu})\ni \rho\mapsto \boldsymbol{\delta}(\lambda,\mu,\rho)$ is strictly increasing and its image is contained inside the interval $(\boldsymbol{\delta}(\lambda,\mu,0^+),\boldsymbol{\delta}(\lambda,\mu,r_{\lambda,\mu}^-))$ with limit values 
$$\boldsymbol{\delta}(\lambda,\mu,0^+)=1-\lambda\quad\text{and}\quad \boldsymbol{\delta}(\lambda,\mu,r_{\lambda,\mu}^-)=d_{\lambda,\mu}.$$  Taking into account \eqref{eq:relation delta and a} and the identities \eqref{identity Delta}, Proposition \ref{lem:delta}  follows.
\end{proof}

Recalling Definition~\ref{def:phi},  the following result enumerates some properties of the conjugacy $\phi$. 

\begin{proposition}\label{lem:phiproperties}
Let $\p= (\lambda,\mu,a,b,c)\in \P$ and  let $0<\rho<r_{\lambda,\mu}$ be the rotation number of $f_{\p}$.  %ml
Then the function $\phi=\phi_{\p,\rho}$ satisfies the following properties:
\begin{enumerate}
\item $\phi-\id$ is $1$-periodic,
\item $\phi$ is right continuous and non-decreasing,
%on $[0,1)$, %ml
\item $\phi$ is strictly increasing 
%on $[0,1)$  %ml
 if $\rho$ is irrational,
\item $\phi$ is constant on each interval $[\frac{n}{q},\frac{n+1}{q})$, $n\in\Z$ provided $\rho=\frac{p}{q}$ is rational, %ml
\item $\phi(0)\ge  c $ and $\phi(1^-) \le b$. Moreover equality holds in both inequalities when $\rho$ is irrational, %ml
\item  for any $y\in \R$, we have the relations %ml 
$$
\lf \phi(y) \rf = \lf y \rf , \quad \{ \phi(y)\} = \phi(\{y\}) \quad \text{and} \quad \phi(y + \rho) = F( \phi(y)). %ml
$$
\end{enumerate}
\end{proposition}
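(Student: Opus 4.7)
The plan is to reduce to the case $(b,c)=(1,0)$ analyzed in \cite{LN21} via the projection $\Theta$ of Lemma~\ref{lem:reduction}. Set $\p_0:=\Theta(\p)=(\lambda,\mu,\delta,1,0)\in\P_0$ with $\delta=\Delta_{\lambda,b,c}(a)=(a-c(1-\lambda))/(b-c)$, and write $\phi_0:=\phi_{\p_0,\rho}$. The central computation is to establish the identity
\[
\phi_{\p,\rho}(y)\;=\;\lfloor y\rfloor+c+(b-c)\,\phi_0(\{y\}),\qquad y\in\R.
\]
This reduces to two elementary verifications: $c+(b-c)\delta/(1-\lambda)=a/(1-\lambda)$ for the constant term, and $(b-c)\bigl(\delta(\mu-1)-\mu(1-\lambda)\bigr)=(1-\lambda)(c-\mu b)+a(\mu-1)$ for the coefficient of $\Phi_\rho(\lambda,\mu,-\{y\})$, both in the spirit of the proof of Proposition~\ref{lem:delta}. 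Equivalently $\phi_{\p,\rho}=H^{-1}\circ\phi_0$, where $H(x):=\lfloor x\rfloor+h(\{x\})$ is the natural lift of the affine bijection $h:J\to I$.

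With this identity in hand, properties (1)--(5) and the first two assertions of (6) are routine consequences of the corresponding properties of $\phi_0$ established in \cite{LN21}. Property (1) is immediate from the explicit formula. Properties (2), (3) and (4) transfer because $x\mapsto c+(b-c)x$ is affine and strictly increasing, so it preserves monotonicity, right continuity, strict monotonicity (in the irrational case), and piecewise constancy on $[n/q,(n+1)/q)$ (in the rational case). Property (5) reduces to $\phi_0(0)\ge 0$ and $\phi_0(1^-)\le 1$ (with equalities when $\rho$ is irrational), since $h^{-1}$ sends $0\mapsto c$ and $1\mapsto b$. For the first two identities in (6), the inclusion $c+(b-c)\phi_0(\{y\})\in[c,b)\subset[0,1)$ directly gives $\lfloor\phi_{\p,\rho}(y)\rfloor=\lfloor y\rfloor$ and $\{\phi_{\p,\rho}(y)\}=\phi_{\p,\rho}(\{y\})$.

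The crucial step is the conjugacy $\phi(y+\rho)=F(\phi(y))$. I would lift the commutative square of Lemma~\ref{lem:reduction} to the identity $F_{\p}\circ H^{-1}=H^{-1}\circ F_{\p_0}$ on the appropriate domain, observing that $H^{-1}$ takes values in $X=\{x\in\R:c\le\{x\}<b\}$, the set on which $F_{\p}$ agrees with the lift coming from $f_{\p}|_J$. The analogous rotational conjugacy $\phi_0(y+\rho)=F_{\p_0}(\phi_0(y))$ from~\cite{LN21} then yields
\[
\phi_{\p,\rho}(y+\rho)=H^{-1}(\phi_0(y+\rho))=H^{-1}(F_{\p_0}(\phi_0(y)))=F_{\p}(H^{-1}(\phi_0(y)))=F_{\p}(\phi_{\p,\rho}(y)).
\]

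The main obstacle will be rigorously justifying the lift-level identity $F_{\p}\circ H^{-1}=H^{-1}\circ F_{\p_0}$: one must check that under $H$ the break point of $f_{\p}$ corresponds to the break point of $f_{\p_0}$, and that the two integer-shifted affine branches defining $F_{\p}$ and $F_{\p_0}$ match correctly, with particular attention to the normalizations by $(1-\lambda)\lfloor x\rfloor$ and $(1-\lambda\mu)\lfloor x\rfloor$. In the rational case $\rho=p/q$, where $\phi_0$ is piecewise constant, the structure is inherited transparently from the identity and causes no extra difficulty.
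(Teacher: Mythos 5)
Your proposal is correct and follows essentially the same route as the paper: both rest on the identity $\phi_{\p,\rho}(y)=\lfloor y\rfloor+c+(b-c)\varphi(\{y\})$ (your $\phi_0$ is exactly the function $\varphi$ of \cite{LN21} with $\delta=\Delta_{\lambda,b,c}(a)$), from which (1)--(5) are imported from \cite{LN21}, and both prove the conjugacy in (6) by combining the functional equation $\varphi(x+\rho)=\cdots$ of \cite[Lemma 14]{LN21} with the fact that $h$ matches the break points, i.e.\ $\{y\}<1-\rho$ iff $\phi(\{y\})<\eta$. The paper carries out the branch-matching by direct substitution in two cases rather than packaging it as the lift-level identity $F_{\p}\circ H^{-1}=H^{-1}\circ F_{\Theta(\p)}$, but this is an organizational difference only; the verification you flag as the ``main obstacle'' is precisely the two-case computation the paper performs.
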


\begin{figure}[ht]
\begin{tikzpicture}
\node at (0,5)
{\includegraphics[width=.50\textwidth]{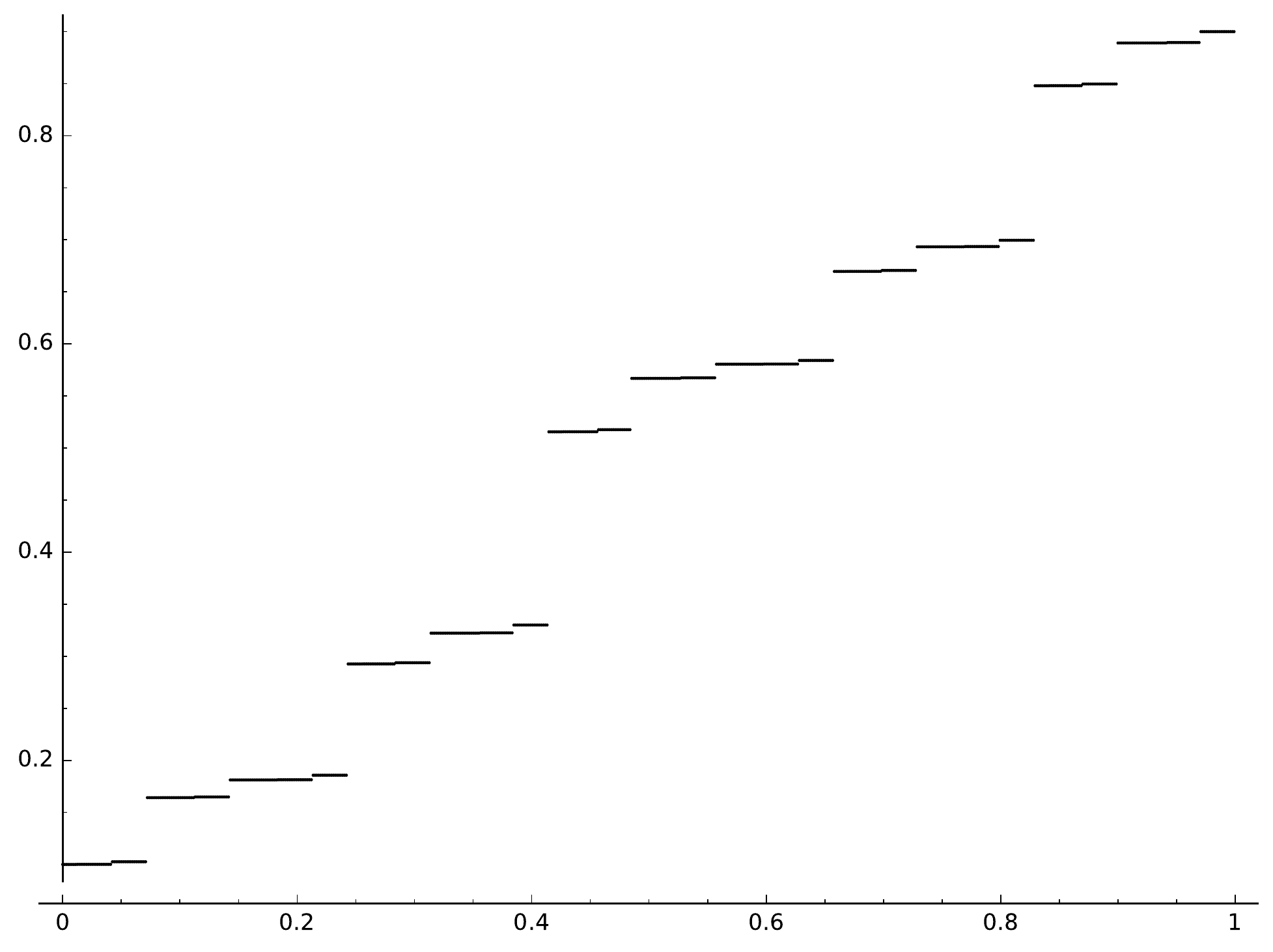}} ;
\end{tikzpicture}
\caption{Graph of the function $\phi_{\p,\rho}(y)$ for the parameters 
$$
\lambda= 0.8,\, \mu= 0.9,\, b=0.9,\, c=0.1, \, a=\bdelta(0.8,0.9,0.9,0.1,\sqrt{2}-1)=0.43557....
$$
and $\rho= \sqrt{2}-1=0.414...$ in the interval $y \in (0,1)$.  
The function $\phi$ increases from $\phi(0^+)=c=0.1$ to $\phi(1^-)=b=0.9$.}
\end{figure}

\begin{proof}
Recall from Section~\ref{sec:reduction} the affine map $h(x)=(x-c)/(b-c)$.  For any $y\in\R$,  a straightforward  computation shows that  %ml
$$
\phi(y)= \lf y \rf + \phi(\{ y \}) = \lf y \rf + h^{-1}(\varphi(\{y\})) = \lf y \rf  + (b-c) \varphi(\{ y \} ) +c,
$$
 where $\varphi\colon \R\to \R$ is the function
$$
\varphi(y)=\lfloor y\rfloor +  \frac{\delta}{1-\lambda}-\frac{\delta-\mu(\lambda+\delta-1)}{\lambda}\Phi_\rho(\lambda,\mu,-\{y\})
$$
as defined in \cite[ Lemma 9]{LN21} and $\delta:=\Delta_{\lambda,b,c}(a)$.   Since $1-\lambda<\delta<d_{\lambda,\mu}$,  all claimed properties follow from \cite{LN21}.
We only  give a detailed proof of the assertion (6), the others being simpler. 
The equalities $ \lf \phi(y) \rf = \lf y \rf $ and $\{\phi(y)\} = \phi(\{y\})$  clearly follow from (1), (2)  and (5). 
It remains to prove the functional equation $\phi(y+ \rho)= F(\phi(y))$. 
%From (1), it is sufficient to prove the relation $\phi(y + \rho) = F( \phi(y))$ for $0 \le y <1$, in which case 
%$$
%\phi(y) = h^{-1}(\varphi(y))= (b-c)\varphi(y)+c.
%$$

  Observe first that $\phi(\{y\}) < \eta$ if $ \{y\} < 1-\rho$, while 
 $\phi(\{y\}) \ge \eta$ if $\{ y \} \ge1-\rho $. Indeed \cite[Lemma 11 and 12]{LN21}, applied to $f_{\Theta(p)}$, shows that the inequalities
 $$
 \varphi(x) <{1-\delta \over \lambda} ={\eta -c \over b-c} = h(\eta) \quad \text{and}  \quad 0 \le x <1
 $$
 hold if and only if $0\le x < 1-\rho$. We make use of the functional equation for $\varphi$, given by \cite[Lemma 14]{LN21}, which reads
 \begin{equation} \label{functeq:varphi}
  \varphi(x+ \rho)= \begin{cases}
\lambda\varphi(x) + \delta + (1-\lambda) \lf x\rf ,  &\text{if} \quad \{ x \}<1- \rho \\
\lambda\mu \varphi(x) + \mu( \delta -1) + 1 + (1-\lambda\mu)\lf x \rf , &\text{if} \quad \{x\} \ge 1-\rho
\end{cases}, 
\end{equation}
 for any $x\in \R$. 
 
 Assume first that $\{ y \} < 1-\rho$. Then, $\lf y+\rho\rf =\lf y\rf $ and $\{y+\rho \} = \{ y\} + \rho$, so that by \eqref{functeq:varphi}
 $$
 \begin{aligned}
 \phi(y+ \rho) & = \lf y \rf +  (b-c)\varphi(\{y\} + \rho)  +c =\lf y\rf + (b-c)(\lambda \varphi(\{y\}) + \delta) + c 
 \\& = \lf y \rf + \lambda(b-c)\varphi(\{y\}) +a+c\lambda  = \lf y \rf + \lambda(\phi(y) -\lf y\rf-c) +a+c\lambda
 \\
 & = \lambda\phi(y) +a + (1-\lambda)\lf y\rf = F(\phi(y)),
 \end{aligned}
 $$
 since $\{\phi(y)\}< \eta$. Assume now that $\{y\} \ge 1- \rho$. In this case, we have $\lf y+\rho\rf =\lf y\rf +1$ and $\{y+\rho \} = \{ y\} + \rho-1$, so that by \eqref{functeq:varphi}
 $$
 \begin{aligned}
 \phi(y+ \rho) & = \lf y \rf + 1+  (b-c)\varphi(\{y\} -1+ \rho)  +c =\lf y\rf +1+ (b-c)( \varphi(\{y\}+\rho) -1) + c 
 \\
 & = \lf y \rf +1+(b-c)\big( \lambda\mu\varphi(\{y\}) + \mu( \delta -1) \big) + c 
 \\ 
  & = \lf y \rf +(b-c) \lambda\mu\varphi(\{y\}) + \mu(a-b+c\lambda) + c +1
\\
 &= \lf y \rf + \lambda\mu(\phi(y) -\lf y\rf-c) +\mu(a-b+c\lambda)+c +1
 \\
 & = \lambda\mu\phi(y)-\lambda\mu \eta +c+1 + (1-\lambda\mu)\lf y\rf = F(\phi(y)),
 \end{aligned}
 $$
 since $\{\phi(y)\}\ge \eta$, replacing  $\delta = {a-c(1-\lambda)\over b-c}$ and $\eta = {b-a\over \lambda}$.
\end{proof}

\section{Proofs of main results}\label{sec:proofs}
\subsection{Proof of Theorem~\ref{maintheorem}}

Let $(\lambda,\mu,b,c)\in\QQ$.  For every $a\in (b-b\lambda,d_{\lambda,\mu,b,a})$,   Lemma~\ref{lem:6.2} asserts that $\rho(f_{\p})=\rho(f_{\Theta(\p)})$ where $\p=(\lambda,\mu,a,b,c)$ and $\Theta(\p)=(\lambda,\mu,\Delta_{\lambda,b,c}(a),1,0)$.
By Lemma~\ref{lem:reduction},  we know that $1-\lambda<\Delta_{\lambda,b,c}(a)<d_{\lambda,\mu}$,  i.e.,  the triple $(\lambda,\mu,\Delta_{\lambda,b,c}(a))$ belongs to the parameter set of \cite{LN21}.  By \cite[Theorem 3]{LN21},  the mapping 
$ \delta \in (1-\lambda,d_{\lambda,\mu}) \mapsto \rho(f_{\lambda,\mu,\delta,1,0})$ is continuous, non-decreasing and its image equals the interval $(0,r_{\lambda,\mu})$.   Since $a\mapsto \Delta_{\lambda,b,c}(a)$ is an increasing affine map sending $(b-b\lambda,d_{\lambda\mu,b,c})$ onto $(1-\lambda,d_{\lambda,\mu})$ and $\rho(f_{\lambda,\mu,a,b,c}) = \rho(f_{\lambda,\mu,\Delta_{\lambda,b,c}(a),1,0})$,  we conclude that the function  $(b-b\lambda,d_{\lambda,\mu,b,a})\ni a\mapsto \rho(f_{\lambda,\mu,a,b,c})$ is continuous, non-decreasing and its image equals the interval $(0,r_{\lambda,\mu})$.   Finally,  to prove properties (1) and (2),  observe that
$$
\boldsymbol{\delta}(\lambda,\mu,\rho) = \Delta_{\lambda,b,c}(\bdelta(\lambda,\mu,b,c,\rho))
$$
where 
$$
\boldsymbol{\delta}(\lambda,\mu,\rho)=\frac{(1-\lambda)(1+\mu\sigma(\lambda,\mu,\rho))}{1+(\mu-1)\sigma(\lambda,\mu,\rho)}
$$
is the function defined in \cite[Definition 2]{LN21}.  Therefore,  (1) and (2) in the statement follow from (1) and (2) of \cite[Theorem 3]{LN21}.
\qed

\subsection{Proof of Corollary~\ref{maincorollary}}
Assume, by contradiction, that the rotation number $\rho=\rho(f_{\lambda,\mu,a,b,c})$ is irrational. Then, by Theorem~\ref{maintheorem} (1), 
$$
a=\frac{(1-\lambda)(b\lambda\mu+(1-\lambda) (\mu b-c)\Psi_\rho(\lambda,\mu))}{\lambda\mu+(\mu-1)(1-\lambda)\Psi_\rho(\lambda,\mu) },
$$
where $\Psi_\rho$ is the Hecke-Mahler series defined in \eqref{eq:Psi}.
As the  coefficients $\lambda$, $\mu$, $b$, $c$ and $a$ are algebraic numbers, we conclude that $\Psi_\rho(\lambda,\mu)$ also takes an algebraic value. However, by \cite[Theorem 10]{LN21}, in this case $\Psi_\rho(\lambda,\mu)$ has to be transcendental, thus it is a contradiction. So $\rho=\rho(f_{\lambda,\mu,a,b,c})$ takes a rational value.
\qed

\subsection{Proof of Theorem~\ref{conjugacy}}
Let $\p=(\lambda,\mu,a,b,c)$ with $(\lambda,\mu,b,c)\in\QQ$ and $a\in (b-b\lambda,d_{\lambda,\mu,b,a})$.  By Lemma~\ref{lem:reduction},  $f_{\p}$ restricted to the interval $J$ is conjugated by the affine map $h:J\to I$ to the map $f_{\Theta(\p)}$ with $\Theta(\p)=(\lambda,\mu,\Delta_{\lambda,b,c}(a),1,0)\in \P_0$ where $\Delta_{\lambda,b,c}$ is defined  in \eqref{eq:Delta}.   Let $g_{\lambda,\mu,\delta}=f_{\Theta(\p)}$ where $\delta:=\Delta_{\lambda,b,c}(a)$.  Because $\Theta(\p)\in\P_0$, we have $1-\lambda<\delta<d_{\lambda,\mu}$ (See Remark~\ref{rem:P0}).  Hence,  the parameters $(\lambda,\mu,\delta)$ belong to the parameter set of \cite{LN21}.  Let $\rho = \rho(f_{\p})$ and notice that $\rho=\rho(g_{\lambda,\mu,\delta})$ by Lemma~\ref{lem:6.2}.  Assume that $\rho$ is irrational.  Applying \cite[Theorem 6]{LN21} to the map $g_{\lambda,\mu,\delta}$, we obtain the following commutative diagram,
\begin{equation*}
\begin{tikzcd}[column sep=huge,row sep=huge]
  J \arrow[d,"f_{\p}|_J"] \arrow[r, "h"] & I \arrow[d,"f_{\Theta(\p)}"]  \arrow[r,leftarrow,"\varphi"] & I \arrow[d,"R_\rho"]\\
  J \arrow[r, "h"] & I \arrow[r,leftarrow,"\varphi"] & I ,
\end{tikzcd}
\end{equation*}
where $\varphi\colon\R\to\R$ is the function
$$
\varphi(y)=\lfloor y\rfloor +  \frac{\delta}{1-\lambda}-\frac{\delta-\mu(\lambda+\delta-1)}{\lambda}\Phi_\rho(\lambda,\mu,-\{y\})
$$
as defined in \cite[Lemma 9]{LN21},  and $\overline{\varphi(I)}$ is a Cantor set which is equal to the $\omega$-limit set $\omega(f_{\Theta(\p)},x)$ of every $x\in I$.   Recall from Section~\ref{sec:reduction} the affine map $h:J\to I$.  For any $y\in\R$,  we have that $\phi(\{y\}) = h^{-1}(\varphi(\{y\}))$. Taking into account that $\overline{h^{-1}(\varphi(I))}$ is also a Cantor set and $\omega(f_{\p},x)=h^{-1}(\omega(f_{\Theta(\p)},h(x))$ for every $x\in J$,  we obtain assertion (1) of Theorem~\ref{conjugacy}.  The assertions (2) and (3) regarding the rational case,  i.e., when $\rho$ is rational, can be deduced in a similar way from  \cite[Theorem 6]{LN21}. Thus, we conclude the proof of the theorem. 
\qed

\section*{Acknowledgement}
JPG was partially supported by the Project CEMAPRE/REM - UIDB/05069/2020 - financed by FCT/MCTES through national funds. 

\bibliographystyle{amsplain}
%\bibliography{Bibfile}

\begin{thebibliography}{10}

\bibitem{ABP22}
A. Antunes, Y. Bugeaud and B. Pires, \emph{ Switched server systems whose parameters are normal numbers in base 4},  Qual. Theory Dyn. Syst. \textbf{21} , 143 (2022).

\bibitem{BS20}
J.~Bowman and S.~Sanderson, \emph{Angel's staircase, sturmian sequences, and
  trajectories on homothetic surfaces}, Journal of Modern Dynamics \textbf{16}  (2020), 109-153.

 \bibitem{Bu93}
Y. Bugeaud, \emph{ Dynamique de certaines applications contractantes,  lin\'eires par morceaux, sur $[0,1]$}, C. R. Acad. Sci. Paris Sr. I Math., \textbf{317} (1993), 575-578.

 \bibitem{BuC99}
Y. Bugeaud and J.-P. Conze.
{\it Calcul de la dynamique d'une classe de transformations  lin\'eaires contractantes mod 1 et arbre de Farey},
Acta Arithmetica {\textbf LXXXVIII.3}  (1999), 201-218.

  \bibitem{Bu04}
Y. Bugeaud, \emph{Linear mod one transformations and the distribution of fractional parts $\{ \xi(p/q)^n\} $ },
 Acta Arithmetica, Volume \textbf{114.4},  \textbf{2004}, 301-311.
 
 \bibitem{BuKLN21}
Y. Bugeaud, D. H. Kim, M. Laurent and A. Nogueira, 
{\it On the Diophantine nature of the elements of Cantor sets arising in the dynamics of contracted rotations}, 
Ann. Sc. Norm. Super. Pisa Cl. Sci. Volume \textbf{5}, 22 (2021), 1691-1704. 

\bibitem{CCG21}
A. Calder\'on; E. Catsigeras and P. Guiraud;  \emph{ A spectral decomposition of the attractor of piecewise-contracting maps of the interval}, Ergodic Theory Dynam. Systems  \textbf{41}  (2021),  no. 7, 1940-1960.


\bibitem{CGMU16}
E. Catsigeras; P. Guiraud;  A. Meyroneinc and E. Ugalde, \emph{On the
 asymptotic properties of piecewise contracting maps}, Dyn. Syst., \textbf{31}  (2016), no.~2, 107-135. 
 
 \bibitem{Co99}
R. Coutinho. {\it Din\^amica simb\'olica linear}, Ph. D. Thesis, Instituto Superior T\'ecnico, Universidade T\'ecnica de Lisboa, February 1999.

 \bibitem{CD15}
H. Cui and  Y. Ding, \emph{Renormalization and conjugacy of piecewise linear {L}orenz maps},
 Advances in Mathematics, Volume \textbf{271} (2015), 235-272.

\bibitem{DH87}
E. J. Ding and P. C. Hemmer, \emph{Exact treatment of mode locking for a piecewise linear map}, J. Statist. Phys., \textbf{46} (1987), 99--110.

\bibitem{FC91} 
O. Feely and L. O. Chua, \emph{The effect of integrator leak in} $\Sigma - \Delta$ \emph{modulation}, IEEE Transactions on Circuits and Systems, \textbf{38} (1991), 1293--1305.


\bibitem{G20}
 J.~P. Gaiv\~ao, \emph{Asymptotic periodicity in outer billiards with  contraction}, Ergodic Theory and Dynamical Systems \textbf{40} (2020), no.~2,  402--417.
  
\bibitem{GN22}
 J.~P Gaiv\~ao and  A. Nogueira,  \emph{ Dynamics of piecewise increasing contractions}. Bull. Lond. Math. Soc. \textbf{54} (2022), no. 2, 482Ð500. 

\bibitem{GCM20}
P.~Guiraud; E.~Catsigeras and A.~Meyroneinc, \emph{Complexity of injective piecewise contracting interval maps}, Ergodic Theory and Dynamical Systems  \textbf{40} (2020), no.~1, 64--88.

\bibitem{H15}
M. Hata;  {\it Neurons. A mathematical ignition}, Series on Number Theory and its Applications, Vol. \textbf{9} (2015),  World Scientific Publishing.

\bibitem{JO19}
S.~Janson and C.~\"Oberg, \emph{A piecewise contractive dynamical system and  election methods}, Bull. Soc. Math. France \textbf{147} (2019), no.~3,  395--441.
  
\bibitem{LN18}
M.~Laurent and A.~Nogueira, \emph{Rotation number of contracted rotations},
  Journal of Modern Dynamics, Volume \textbf{12} (2018), 175-191.

\bibitem{LN21}
M.~Laurent and A.~Nogueira. \emph{Dynamics of 2-interval piecewise affine maps and Hecke-Mahler
  series}, Journal of Modern Dynamics, Volume \textbf{17} (2021), 33-63.
  
\bibitem{LoVdPA77}
J.H. Loxton  and A.J. van der Poorten.
{\it Arithmetic properties of certain functions in several variables III},
Bull. Austral. Math. Soc., \textbf{16}  (1977), 15-47.

\bibitem{LoVdPB77}
J.H. Loxton  and A.J. van der Poorten.
{\it Transcendence and algebraic independence by a method of Mahler}, in 
{\it Transcendence Theory: Advances and applications}, ed. by A. Baker and D.W.  Masser, Academic Press (1977), 211-226.

\bibitem{NS72}
J. Nagumo and S. Sato.  {\it  On a response characteristic of a mathematical neuron model}, Kybernetik \textbf{10}(3) (1972), 155-164.

\bibitem{N18}
F.~Nakamura, \emph{Asymptotic behavior of non-expanding piecewise linear maps
  in the presence of random noise}, Discrete and Continuous Dynamical Systems - B \textbf{23} (2018), no.~6, 2457--2473.
  
%  \bibitem{Ni96}
%K. Nishioka. {\it  Mahler Functions and Transcendence}, Springer Lecture Notes in Mathematics,  Vol. 1631 (1996).

%\bibitem{NST92} 
%K. Nishioka, I. Shiokawa and J. Tamura. 
%{\it Arithmetical properties of certain power series}, J. Number Theory \textbf{42} (1992), 61-87.

 
 \bibitem{NPR14}
A.~Nogueira, B.~Pires and R.~A. Rosales, \emph{Asymptotically periodic piecewise contractions of the interval}, Nonlinearity  \textbf{27}  (2014), 1603--1610. 
  
\bibitem{NP15}
A.~Nogueira and B.~Pires \emph{Dynamics of piecewise contractions of the interval}, Ergodic Theory Dynam. Systems \textbf{35}
  (2015),  2198--2215.


\bibitem{NPR18}
A.~Nogueira, B.~Pires and R.~A. Rosales, 
\emph{Topological dynamics of  piecewise {$\lambda$}-affine maps}, Ergodic Theory Dynam. Systems \textbf{38}  (2018), no.~5, 1876--1893. 


\bibitem{RT86}
F.  Rhodes and C. L. Thompson.  
\emph{Rotation numbers for monotone functions on the circle}.  J. London Math. Soc. (2), \textbf{34} (1986), 360-368.

\bibitem{RT91}
F.  Rhodes and C. L. Thompson.   
\emph{Topologies and Rotation Numbers for Families of Monotone Functions on the Circle}. J. London Math. Soc. (2), \textbf{43} (1991), 156-170.




\end{thebibliography}

\vskip 5mm

\centerline{\scshape {\rm Jos\'e Pedro} Gaiv\~ao
 }

{\footnotesize
 \centerline{Department of Mathematics
ISEG, Universidade de Lisboa}
\centerline{Rua do Quelhas 6, 
1200-781, Lisboa
Portugal
 }
 \centerline{jpgaivao@iseg.ulisboa.pt
}}

\medskip

\centerline{\scshape {\rm Michel} Laurent {\rm and } {\rm Arnaldo} Nogueira }

{\footnotesize
 \centerline{Aix Marseille Univ, CNRS, Institut de Math\'ematiques de Marseille}
 \centerline{ 
 163 avenue de Luminy, Case 907,
 13288  MARSEILLE C\'edex 9 France}
 \centerline{michel-julien.laurent@univ-amu.fr and  arnaldo.nogueira@univ-amu.fr}}

\end{document}